\begin{document}

\numberwithin{equation}{section}

\theoremstyle{plain}
\newtheorem{theorem}{Theorem}[section] \newtheorem*{theorem*}{Theorem}
\newtheorem{proposition}[theorem]{Proposition} \newtheorem*{proposition*}{Proposition}
\newtheorem{lemma}[theorem]{Lemma} \newtheorem*{lemma*}{Lemma}
\newtheorem{corollary}[theorem]{Corollary} \newtheorem*{corollary*}{Corollary}

\theoremstyle{definition}
\newtheorem{definition}[theorem]{Definition} \newtheorem*{definition*}{Definition}
\newtheorem{example}[theorem]{Example} \newtheorem*{example*}{Example}
\newtheorem{remark}[theorem]{Remark} \newtheorem*{remark*}{Remark}
\newtheorem{hypotheses}[theorem]{Hypotheses} \newtheorem{assumption}[theorem]{Assumption}
\newtheorem{notation}[theorem]{Notation}

\newcommand{\ds}{\displaystyle} \newcommand{\nl}{\newline}
\newcommand{\eps}{\varepsilon} \newcommand{\barz}{\overline{z}}
\newcommand{\bE}{\mathbb{E}} \newcommand{\barm}{\overline{m}} \newcommand{\QV}{\mbox{QV}} \newcommand{\barM}{\overline{M}}
\newcommand{\cA}{\mathcal{A}}
\newcommand{\cB}{\mathcal{B}}
\newcommand{\cC}{\mathcal{C}}
\newcommand{\cL}{\mathcal{L}}
\newcommand{\cS}{\mathcal{S}}
\newcommand{\cO}{\mathcal{O}}
\newcommand{\cQ}{\mathcal{Q}}
\newcommand{\cH}{\mathcal{H}}
\newcommand{\cF}{\mathcal{F}}
\newcommand{\fF}{\mathfrak{F}}
\newcommand{\cM}{\mathcal{M}}
\newcommand{\cI}{\mathcal{I}}
\newcommand{\cD}{\mathcal{D}}
\newcommand{\cT}{\mathcal{T}}
\newcommand{\cG}{\mathcal{G}}
\newcommand{\cP}{\mathcal{P}}
\newcommand{\bP}{\mathbb{P}}
\newcommand{\bT}{\mathbb{T}}
\newcommand{\bD}{\mathbb{D}}
\newcommand{\bQ}{\mathbb{Q}}
\newcommand{\bC}{\mathbb{C}}
\newcommand{\bN}{\mathbb{N}}
\newcommand{\bR}{\mathbb{R}}

\title{The Functional Meyer-Tanaka Formula}

\author{Yuri F. Saporito \thanks{Escola de Matem\'atica Aplicada (EMAp), Funda\c{c}\~ao Get\'ulio Vargas (FGV), Rio de Janeiro, Brazil, {\em yuri.saporito@fgv.br}.}}

\maketitle

\begin{abstract}
The functional It\^o formula, firstly introduced by Bruno Dupire for continuous semimartingales, might be extended in two directions: different dynamics for the underlying process and/or weaker assumptions on the regularity of the functional. In this paper, we pursue the former type by proving the functional version of the Meyer-Tanaka Formula. Following the idea of the proof of the classical time-dependent Meyer-Tanaka formula, we study the mollification of functionals and its convergence properties. As an example, we study the running maximum and the max-martingales of Yor and Ob{\l}\'oj.
\end{abstract}

\section{Introduction}

Our goal in this article is to prove the functional extension of the well-known Meyer-Tanaka formula. The theory of functional It\^o calculus was presented in the seminal paper \cite{fito_dupire} and it was further developed and applied to diverse topics, for instance, in \cite{fito_zhang_viscosity_I, fito_zhang_viscosity_II, fito_peng_bsde, fito_saporito_greeks, fito_cont_martingales, fito_cont_change, fito_generalization}. Before proceeding, a remark regarding nomenclature. In this paper, the adjective \textit{classical} will always refer to the finite-dimensional It\^o stochastic calculus.

The Meyer-Tanaka formula is the extension of It\^o formula to convex functions. More precisely, in the classical case, if $f:\bR \longrightarrow \bR$ is convex and $(x_t)_{t \geq 0}$ is a continuous semimartingale, then
\begin{align}
f(x_t) = f(x_0) + \int_0^t f'(x_s) dx_s + \int_{\bR} L^x(t,y) df'(y), \label{eq:tanaka}
\end{align}
where $f'$ is the left-derivative of $f$ and $L^x(s,y)$ is the local time of the process $x$ at $y$; see \cite{Karatzas1988}, for example. This formula is easily generalized to functions $f$ that are absolutely continuous with derivative of bounded variation, which is equivalent to say that $f$ is the difference of two convex functions. We would like to remind the reader that the local time is defined by the limit in probability:
$$L^x(t,y) = \lim_{\eps \to 0^+} \frac{1}{4\eps} \int_0^t 1_{[y-\eps,y+\eps]}(x_s) d\langle x \rangle_s,$$
where $\langle x \rangle$ is the quadratic variation of the process $x$. We are adhering the convention $4\eps$ instead of $2\eps$. The random field $(L^x(t,y))_{t, y}$ is a.s continuous and increasing in $t$ and c\`adl\`ag in $y$. The following extension to time-dependent functions was established in \cite{generalized_ito_elworthy}:
\begin{align}
f(t,x_t) &= f(0,x_0) + \int_0^t \partial_t^- f(s,x_s) ds + \int_0^t \partial_x^- f(s,x_s) dx_s \label{eq:time_depent_tanaka} \\
&+ \int_{\bR} L^x(t,y) d_y \partial_x^- f(t,y) - \int_{\bR} \int_0^t L^x(s,y) d_{s,y} \partial_x^- f(s,y), \nonumber
\end{align}
where $\partial_t^- f$ and $\partial_x^- f$ are the time and space left-derivatives, respectively. It is assumed that $f$ is absolutely continuous in each variable, $\partial_t^- f$ and $\partial_x^- f$ exist, are left-continuous and locally bounded, $\partial_x^- f$ is of locally bounded variation in $\bR_+ \times \bR$ and $\partial_x^- f(0,\cdot)$ is of locally bounded variation in $\bR$. The notation $d_y$ and $d_{s,y}$ mean integration with respect to the $y$ variable and the $(s,y)$ variables, respectively. We forward the reader to the reference cited above for some other different generalizations of Meyer-Tanaka formula (\ref{eq:tanaka}) and for the precise definition of the Lebesgue-Stieltjes integral $\int_{\bR} \int_0^t L^x(s,y) d_{s,y} \partial_x^- f(s,y)$.

Since a functional extension of the Meyer-Tanaka would be inherently time\- dependent, Equation (\ref{eq:time_depent_tanaka}) is of utmost importance for our goal. However, we will not pursue a functional extension of (\ref{eq:time_depent_tanaka}) in its full generality of assumptions. It is clear that some of the technical assumptions of the results presented in our work could be weakened along the lines of \cite{generalized_ito_elworthy}, but in order to provide a clear exposition of the subject we will consider technical assumptions that are general enough to introduce the important techniques without adding a cumbersome notation.

There are several other generalizations of the It\^o formula that could be extended to the functional framework, for instance, \cite{Al-Hussaini1987,Peskir2005,lowther_nondiff,Ghomrasni2003,generalized_ito_elworthy,Russo1996,Foellmer1995, generalized_ito_feng, Carlen1992}. We will not pursue them here, of course, but we hope that the foundations laid in this work might help in this task.

Meyer-Tanaka formula and its generalizations have many interesting applications in Finance, as, for instance, \cite{local_time_barrier,russian_option, american_min_detemple}. Other applications can be found in the theory of Local Volatility of \cite{dupire94}, see for example \cite{klebaner02}.

The paper is organized as follows: we finish this introduction with a presentation of functional It\^o calculus and we define the mollification of functionals in Section \ref{sec:mollification}. This is a very important tool that will be used in Section \ref{sec:tanaka_formula} in order to prove the functional extension of the Meyer-Tanaka formula. In Section \ref{sec:applications}, we will apply the theory to the running maximum to find a pathwise version of a famous identity by Paul L\'evy and we will also study the max-martingales of Yor and Ob{\l}\'oj in the light of the functional It\^o calculus.

\subsection{A Brief Primer on Functional It\^o Calculus}

In this section we will present a short review of the functional It\^o calculus introduced in \cite{fito_dupire}. The goal is to familiarize the reader with the notation, main definitions and theorems needed for the results that follow.

The space of c\`adl\`ag paths in $[0,t]$ will be denoted by $\Lambda_t$. For a fixed time horizon $T > 0$, we define the \textit{space of paths} as
$$\Lambda = \bigcup_{t \in [0,T]} \Lambda_t.$$

We will denote elements of $\Lambda$ by upper case letters and often the final time of its domain will be subscripted, e.g. $Y \in \Lambda_t \subset \Lambda$ will be denoted by $Y_t$. The value of $Y_t$ at a specific time will be denoted by lower case letters: $y_s = Y_t(s)$, for any $s \leq t$. Moreover, if a path $Y_t$ is fixed, the path $Y_s$, for $s \leq t$, will denote the restriction of the path $Y_t$ to the interval $[0,s]$.

The following important path deformations are always defined in $\Lambda$. For $Y_t \in \Lambda$ and $t \leq s \leq T$, the \textit{flat extension} of $Y_t$ up to time $s \geq t$ is defined as
$$Y_{t,s-t}(u) = \left\{
\begin{array}{ll}
  y_u,  &\mbox{ if } \quad 0 \leq u \leq t, \\
  y_t,  &\mbox{ if } \quad t \leq u \leq s,
\end{array}
\right.$$
see Figure \ref{fig:flat}. For $h \in \bR$, the \textit{bumped path}, see Figure \ref{fig:bump}, is defined by
$$Y_t^h(u) = \left\{
\begin{array}{ll}
  y_u,      &\mbox{ if } \quad 0 \leq u < t, \\
  y_t + h,  &\mbox{ if } \quad u = t.
\end{array}
\right.$$

\begin{figure}[h!]
\centering
  \begin{minipage}[b]{0.4\linewidth}
    \centering
    \includegraphics[width=0.8\linewidth]{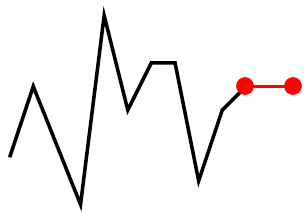}
    \caption{Flat extension of a path.}
    \label{fig:flat}
  \end{minipage}
  \hspace{0.5cm}
  \begin{minipage}[b]{0.4\linewidth}
    \centering
    \includegraphics[width=0.8\linewidth]{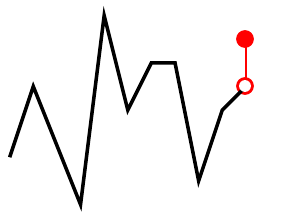}
    \caption{Bumped path.}
    \label{fig:bump}
  \end{minipage}
\end{figure}

For any $Y_t, Z_s \in \Lambda$, where it is assumed without loss of generality that $s \geq t$, we consider the following metric in $\Lambda$,
$$d_{\Lambda}(Y_t,Z_s) = \| Y_{t,s-t} - Z_s\|_{\infty} + |s -t|,$$
where
$$\|Y_t\|_{\infty} = \sup_{u \in [0,t]} |y_u|.$$
One could easily show that $(\Lambda, d_{\Lambda})$ is a complete metric space.

Additionally, a \textit{functional} is any function $f:\Lambda \longrightarrow \bR$. Continuity with respect to $d_{\Lambda}$ is defined as the usual definition of continuity in a metric space and is denominated $\Lambda$\textit{-continuity}.

For a functional $f$ and a path $Y_t$ with $t < T$, the \textit{time functional derivative} of $f$ at $Y_t$ is defined as
\begin{align}
\Delta_t f(Y_t) = \lim_{\delta t \to 0^+} \frac{f(Y_{t,\delta t}) - f(Y_t)}{\delta t}, \label{eq:time_deriv}
\end{align}
whenever this limit exists. The \textit{space functional derivative} of $f$ at $Y_t$ is defined as, if the limit exists,
\begin{align}
\Delta_x f(Y_t) = \lim_{h \to 0} \frac{f(Y_t^h) - f(Y_t)}{h}. \label{eq:space_deriv}
\end{align}

Finally, for any $i,j \in \{0\} \cup \bN \cup \{+\infty\}$, a functional $f: \Lambda \longrightarrow \bR$ is said to belong to $\bC^{i,j}$ if it is $\Lambda$-continuous and it has $\Lambda$-continuous derivatives $\Delta_t^{(k)} f$ and $\Delta_x^{(m)} f$, for $k=1,\ldots, i$ and $m=1,\ldots,j$. Here, clearly, $\Delta_t^{(k)} = \Delta_t (\Delta_t^{(k-1)})$ and $\Delta_x^{(m)} = \Delta_x (\Delta_x^{(m-1)})$. Moreover, we use the notation $\Delta_{xx} = \Delta_x^{(2)}$.\\

The attentive reader might have noticed that we have not introduced any probability notation so far. We start by fixing a probability space $(\Omega, \fF, \bP)$. We state now the functional It\^o formula. The proof can be found in \cite{fito_dupire}.
\begin{theorem}[Functional It\^o Formula; \cite{fito_dupire}]\label{thm:fif}
Let $x$ be a continuous semimartingale and $f \in \bC^{1,2}$. Then, for any $t \in [0,T]$,
$$f(X_t) = f(X_0) + \int_0^t \Delta_t f(X_s) ds + \int_0^t \Delta_x f(X_s) dx_s + \frac{1}{2} \int_0^t \Delta_{xx} f(X_s) d\langle x\rangle_s \quad \bP-\mbox{a.s.}$$
\end{theorem}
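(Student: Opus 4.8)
The plan is to reduce the functional statement to the classical Itô formula by a Riemann-sum/discretization argument, in the spirit of the proof in \cite{fito_dupire}. Fix $t \in [0,T]$ and a sequence of partitions $\pi_n = \{0 = t_0^n < t_1^n < \cdots < t_{k_n}^n = t\}$ with mesh $|\pi_n| \to 0$. For each $n$, let $X^n$ denote the path that is piecewise constant on $\pi_n$ and agrees with $x$ at the partition points; since $x$ is continuous, its paths are uniformly continuous on $[0,t]$, so $X^n \to X$ in $d_{\Lambda}$ almost surely. First I would use the telescoping identity
$$f(X^n_t) - f(X^n_0) = \sum_{i=0}^{k_n-1} \big( f(X^n_{t_{i+1}^n}) - f(X^n_{t_i^n}) \big),$$
and exploit that, because $X^n$ is flat on each $[t_i^n, t_{i+1}^n)$, each increment factors through the two elementary path deformations introduced above.

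More precisely, writing $\delta_i = t_{i+1}^n - t_i^n$ and $h_i = x_{t_{i+1}^n} - x_{t_i^n}$, I would insert the flatly extended path $X^n_{t_i^n,\delta_i}$ and split each summand as
$$\big( f(X^n_{t_i^n,\delta_i}) - f(X^n_{t_i^n}) \big) + \big( f(X^n_{t_{i+1}^n}) - f(X^n_{t_i^n,\delta_i}) \big).$$
The first bracket is a pure time increment, controlled by the definition \eqref{eq:time_deriv} of $\Delta_t f$, so its sum is a Riemann sum approximating $\int_0^t \Delta_t f(X_s)\,ds$. The second bracket is a pure bump of size $h_i$ applied to $X^n_{t_i^n,\delta_i}$; since $f \in \bC^{1,2}$, a second-order Taylor expansion in the bump parameter (justified by \eqref{eq:space_deriv} and the existence of $\Delta_{xx} f$) yields
$$\Delta_x f(X^n_{t_i^n,\delta_i})\, h_i + \frac{1}{2}\, \Delta_{xx} f(X^n_{t_i^n,\delta_i})\, h_i^2 + o(h_i^2).$$

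Summing over $i$ and letting $|\pi_n| \to 0$, I expect the three families of terms to converge, respectively, to $\int_0^t \Delta_t f(X_s)\,ds$, to the Itô integral $\int_0^t \Delta_x f(X_s)\,dx_s$, and to $\frac{1}{2}\int_0^t \Delta_{xx} f(X_s)\,d\langle x\rangle_s$. The first is a deterministic Riemann-sum limit using $\Lambda$-continuity of $\Delta_t f$; the second follows from the convergence of non-anticipative (left-endpoint) Riemann sums to the Itô integral; the third uses that $\sum_i g(X^n_{t_i^n})\, h_i^2$ converges to $\int_0^t g(X_s)\,d\langle x\rangle_s$ for the continuous integrand $g = \Delta_{xx} f$, together with the a.s. uniform continuity of the paths. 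In each case $\Lambda$-continuity lets me replace evaluation at the flatly extended path $X^n_{t_i^n,\delta_i}$ by evaluation at $X_s$ in the limit.

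The main obstacle is to control these limits at the level of stochastic, rather than merely pathwise, convergence. Two points deserve care. First, to make the remainder $o(h_i^2)$ negligible and the quadratic-variation sum converge, I would localize by stopping $x$ at the exit time of a large ball, so that the relevant paths lie in a $d_{\Lambda}$-relatively-compact set on which $\Delta_x f$ and $\Delta_{xx} f$ are uniformly continuous and bounded; this simultaneously supplies the integrability needed and lets dominated convergence absorb the remainder. Second, the convergence of the first-order sums to the Itô integral requires that the piecewise-constant integrand built from $\Delta_x f(X^n_{t_i^n})$ converge to $\Delta_x f(X_s)$ in the sense appropriate for stochastic integrals (for instance, uniformly on compacts in probability), which again rests on $\Lambda$-continuity of $\Delta_x f$ and uniform continuity of the paths. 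Once the localization is in place, letting the ball grow removes the stopping and completes the argument.
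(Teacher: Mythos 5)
The paper itself does not prove this theorem: it states it and defers entirely to \cite{fito_dupire}, so there is no internal proof to compare against. Your strategy --- telescoping over a partition, factoring each increment through a flat extension followed by a bump, Taylor-expanding in the bump parameter, and identifying the three Riemann-sum limits --- is exactly the strategy of the proof in that cited reference, so the overall architecture is the right one.

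There is, however, one step that is genuinely wrong as written: stopping $x$ at the exit time of a large ball does \emph{not} place the relevant paths in a $d_{\Lambda}$-relatively-compact set. Balls in $(\Lambda, d_{\Lambda})$ are infinite-dimensional, so boundedness of the paths gives neither compactness nor, consequently, the uniform continuity and boundedness of $\Delta_x f$ and $\Delta_{xx} f$ that you invoke to kill the $o(h_i^2)$ remainders and to dominate the integrands; $\Lambda$-continuity alone is not uniform on bounded sets. The repair is different from localization by a ball: for each fixed $\omega$ the map $s \mapsto X_s(\omega)$ is continuous from $[0,t]$ into $\Lambda$ (this uses continuity of $x(\omega)$), so $\{X_s(\omega) : s \leq t\}$ is compact, and the discretized and flat-extended paths $X^n_{t_i^n,u}$ converge to this compact set uniformly as $|\pi_n| \to 0$; hence the collection of \emph{all} paths appearing in your sums is relatively compact $\omega$ by $\omega$, and uniform continuity and boundedness of the derivatives along them holds pathwise. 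That settles the time-derivative sum, the quadratic sum and the remainders almost surely. For the It\^o-integral term one further point needs care: the dominated convergence theorem for stochastic integrals requires a \emph{predictable}, locally $x$-integrable dominating process, and the pathwise bound $\sup_{n,i}|\Delta_x f(X^n_{t_i^n,\delta_i})|$ produced by the compactness argument is $\mathcal{F}_t$-measurable, not predictable. You must instead localize with stopping times such as $\tau_K = \inf\{s : |\Delta_x f(X_s)| \geq K\}$, or impose extra hypotheses (this is precisely why the rigorous treatments of Cont and Fourni\'e add conditions like boundedness-preservation to $\Lambda$-continuity). With these two corrections your argument goes through.
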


One should notice that the It\^o formula above is of the same form as the classical It\^o formula for continuous semimartingale, the only change being the definition of the time and space functional derivatives given by Equations (\ref{eq:time_deriv}) and (\ref{eq:space_deriv}). This theorem was extended in terms of weakening the regularity of $f$ and generalizing the dynamics of $x$, see \cite{fito_cont_martingales, fito_cont_change, fito_generalization}. Here, we will examine a different class of functionals than it was considered in these previous works. We now state the main result of this paper:

\begin{theorem*}[Functional Meyer-Tanaka Formula]
Consider a functional $f: \Lambda \longrightarrow \bR$ satisfying Hypotheses \ref{hypo:hypotheses} and let $x$ be a continuous semimartingale. Then
\begin{align}
f(X_t) &= f(X_0) + \int_0^t \Delta_t f(X_s) ds + \int_0^t \Delta_x^- f(X_{s}) dx_s \label{eq:meyer_tanaka_form_intro} \\
&+ \int_{\bR}  L^x(t,y) d_y\partial_y^- f(X_{t-}^y) - \int_0^t \int_{\bR} L^x(s,y) d_{s,y}\partial_y^- f(X_{s-}^y) \quad \bP-\mbox{a.s.}, \nonumber
\end{align}
where $X_{s-}^y$ is the path $X_s$ with the value at $s$ substituted by $y$, see Equation (\ref{eq:cF}).
\end{theorem*}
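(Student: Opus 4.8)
The plan is to follow the blueprint of the classical Meyer--Tanaka proof: approximate $f$ by smooth functionals, apply the functional It\^o formula (Theorem \ref{thm:fif}), and pass to the limit. First I would invoke the mollification construction of Section \ref{sec:mollification} to produce a family $f_\eps \in \bC^{1,2}$ approximating $f$, together with the convergence properties established there: $f_\eps \to f$ $\Lambda$-uniformly on compacts, $\Delta_t f_\eps \to \Delta_t f$, and $\Delta_x f_\eps \to \Delta_x^- f$ off a set not charged by the occupation measure of $x$. Applying Theorem \ref{thm:fif} to each $f_\eps$ gives
$$f_\eps(X_t) = f_\eps(X_0) + \int_0^t \Delta_t f_\eps(X_s)\,ds + \int_0^t \Delta_x f_\eps(X_s)\,dx_s + \frac12\int_0^t \Delta_{xx}f_\eps(X_s)\,d\langle x\rangle_s,$$
and the whole argument reduces to identifying the limit of each of the four terms on the right as $\eps \to 0$.

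The first three terms are the routine part. The values $f_\eps(X_t), f_\eps(X_0)$ converge by $\Lambda$-continuity; the drift term $\int_0^t \Delta_t f_\eps(X_s)\,ds$ converges by dominated convergence; and the stochastic integral $\int_0^t \Delta_x f_\eps(X_s)\,dx_s \to \int_0^t \Delta_x^- f(X_s)\,dx_s$ in probability, via the dominated-convergence theorem for stochastic integrals, once $\Delta_x f_\eps$ is controlled uniformly and shown to converge to the left-derivative $\Delta_x^- f$ along the path. Hypotheses \ref{hypo:hypotheses} should supply exactly the local boundedness these arguments require.

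The crux is the second-order term. The key structural observation is that the functional second derivative is an ordinary second derivative in the terminal value: writing $h_\eps(s,y) = f_\eps(X_{s-}^y)$ for the function obtained by fixing the path up to $s-$ and moving its endpoint to $y$ (see Equation (\ref{eq:cF})), one has $X_s^h = X_{s-}^{x_s+h}$, whence $\Delta_{xx} f_\eps(X_s) = \partial_{yy} h_\eps(s,y)\big|_{y=x_s}$. Feeding this into the occupation-time formula (equivalently, the defining limit of $L^x$ with the $4\eps$ normalization) converts the $d\langle x\rangle_s$-integral into an integral against the local-time measure in the $(s,y)$-plane,
$$\frac12\int_0^t \Delta_{xx}f_\eps(X_s)\,d\langle x\rangle_s = \int_{\bR}\int_0^t \partial_{yy}h_\eps(s,y)\,d_s L^x(s,y)\,dy,$$
after which an integration by parts in the $(s,y)$ variables shifts one derivative off $\partial_{yy}h_\eps$ onto $L^x$. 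Because $h_\eps$ carries its $s$-dependence through the random path $X_{s-}$ rather than through a deterministic time variable, this is precisely the mechanism that, as in the derivation of the time-dependent formula (\ref{eq:time_depent_tanaka}), splits the second-order contribution into the terminal term $\int_{\bR} L^x(t,y)\,d_y\partial_y^- f(X_{t-}^y)$ and the interior term $-\int_0^t\int_{\bR} L^x(s,y)\,d_{s,y}\partial_y^- f(X_{s-}^y)$.

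I expect the main obstacle to be the passage to the limit in this last term. It requires: (i) that $(s,y)\mapsto \partial_y^- f(X_{s-}^y)$ have locally finite variation in the $(s,y)$-plane, so that the Lebesgue--Stieltjes integral $d_{s,y}\partial_y^- f$ and the integration by parts above are legitimate — this must be read off from Hypotheses \ref{hypo:hypotheses}; and (ii) the convergence of the measures $d_y\partial_y^- f_\eps(X_{t-}^y) \to d_y\partial_y^- f(X_{t-}^y)$ and $d_{s,y}\partial_y^- f_\eps \to d_{s,y}\partial_y^- f$, combined with the a.s.\ joint continuity of $(s,y)\mapsto L^x(s,y)$ and its compact $y$-support, to interchange the limit with the integrals. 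Convexity of $f$ (forcing $\partial_{yy}h_\eps \ge 0$ and monotonicity of $y\mapsto\partial_y^- f(X_{s-}^y)$) is what makes the $y$-marginals genuine positive measures, finite on compacts; controlling the dependence in $s$ is where the functional setting departs most sharply from the classical one and will demand the most care.
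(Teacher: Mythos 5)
Your proposal follows essentially the same route as the paper: mollify $f$, apply the functional It\^o formula to the smooth approximations, pass to the limit in the first three terms via $\Lambda$-continuity and (ordinary and stochastic) dominated convergence, and identify the second-order term by combining the occupation-time formula with the key identity $\Delta_{xx} f_n(X_s) = \partial_{yy}\cF_n(X_s,y)\big|_{y=x_s}$ and integration by parts in $(s,y)$. The technical points you flag at the end (local finite variation of $(s,y)\mapsto\partial_y^- f(X_{s-}^y)$ and weak convergence of the associated measures against the jointly continuous local time) are exactly the ones the paper also defers, treating them by testing against smooth compactly supported $g$ and $q$ and appealing to the arguments of Elworthy, Truman and Zhao.
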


\begin{notation}
$d_y \phi(y)$ and $d_{s,y} \phi(s,y)$ denote the Lebesgue-Stieltjes integration with respect to the integrator $\phi(y)$ and $\phi(s,y)$, respectively.
\end{notation}

The main example of non-smooth functional to have in mind is the running maximum:
\begin{align}\label{eq:running_max}
\barm(Y_t) = \sup_{0 \leq s \leq t} y_s.
\end{align}
For more details on this functional, we forward the reader to Section \ref{sec:running_max}

\section{Functional Mollification}\label{sec:mollification}

In this section, we investigate the mollification of functionals. The goal is to create a sequence of smooth functionals converging to the original one in various senses. This technique will be used to prove the functional Meyer-Tanaka formula as it is similarly done in the proof of its classical version.

\begin{definition}\label{def:capital_F}
For any functional $f: \Lambda \longrightarrow \bR$, we define $F: \Lambda \times \bR \longrightarrow \bR$ as
\begin{align}
F(Y_t,h) = f(Y_t^h).\label{eq:capital_F}
\end{align}
When denoting functionals, capital letters will be used as above, i.e. it will denote a function with domain $\Lambda \times \bR$ where the first variable is the path and the second variable is the bump applied to this path. This notation will be carried out in the remainder of the paper. We choose to use this notation to help the analysis of the space functional derivative of the mollification.
\end{definition}

A mollifier in $\bR$ is a positive function $\rho : \bR \longrightarrow [0,+\infty)$ such that $\rho \in C_c^{\infty}(\bR)$, the space of compactly supported smooth functions; $\int_{\bR} \rho(z) dz = 1$; and $\rho_n(x) := n \rho(n x)$ converges to Dirac delta in the sense of distributions. We also refer to the sequence $(\rho_n)_{n \in \bN}$ as the mollifiers. Notice that $\rho_n \in C^{\infty}_c(\bR)$.


\begin{definition}\label{def:mollification}
The sequence of \textit{mollified functionals} is defined as
\begin{align}
F_n(Y_t,h) &= \int_{\bR} \rho_n(h - \xi) F(Y_t,\xi) d\xi = \int_{\bR} \rho_n(\xi) F(Y_t,h-\xi) d\xi. \label{eq:mollifier_R} 
\end{align}
\end{definition}

\begin{remark}
This mollification is well-defined as long as the real function $F(Y_t,\cdot)$ is locally integrable for any path $Y_t \in \Lambda$. See \cite{pde_evans}, for instance, for details on the mollification in the case of real functions.
\end{remark}

\begin{proposition}
Suppose $f$ is $\Lambda$-continuous. Then $F(Y_t,\cdot)$ is continuous for each $Y_t \in \Lambda$, $F_n$ is well-defined and, as a functional, is infinitely differentiable in space. Moreover,
$$\Delta_x^{(k)} F_n(Y_t,h) = \partial_h^{(k)} F_n(Y_t,h),$$
where $\partial_h^{(k)}$ denotes the $k$-th derivative with respect to $h$. This is the main property of the mollified functionals.
\end{proposition}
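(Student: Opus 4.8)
The plan is to reduce everything to the classical theory of mollification of real functions, the bridge being the elementary observation that bumping a path twice simply adds the bumps. I would organize the argument in three stages: continuity and well-definedness, smoothness in the scalar variable $h$, and finally the identification of the space functional derivative $\Delta_x$ with the ordinary partial derivative $\partial_h$.

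First I would establish continuity of $h \mapsto F(Y_t,h)$. Fix $Y_t \in \Lambda$ and let $h' \to h$. Since $Y_t^{h'}$ and $Y_t^{h}$ share the same domain $[0,t]$ and agree off the endpoint, one has $d_{\Lambda}(Y_t^{h'}, Y_t^{h}) = |h'-h|$, so the $\Lambda$-continuity of $f$ gives $F(Y_t,h') = f(Y_t^{h'}) \to f(Y_t^{h}) = F(Y_t,h)$. In particular $F(Y_t,\cdot)$ is continuous, hence locally integrable, so the convolution defining $F_n(Y_t,\cdot)$ converges (the integrand $\xi \mapsto \rho_n(\xi)F(Y_t,h-\xi)$ is continuous and compactly supported), settling well-definedness and the hypothesis of the preceding Remark.

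Next, smoothness in $h$ is the standard mollification fact. Writing $F_n(Y_t,h) = \int_{\bR}\rho_n(h-\xi)F(Y_t,\xi)\,d\xi$ and differentiating under the integral sign---justified because $\rho_n \in C_c^{\infty}(\bR)$, so all its derivatives are continuous and compactly supported while $F(Y_t,\cdot)$ is bounded on compact sets---yields $\partial_h^{(k)}F_n(Y_t,h) = \int_{\bR}\rho_n^{(k)}(h-\xi)F(Y_t,\xi)\,d\xi$, so $F_n(Y_t,\cdot) \in C^{\infty}(\bR)$ for each fixed path.

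The heart of the matter, and the step I expect to require the most care, is translating $\partial_h$ into $\Delta_x$. The key lemma is the composition law $(Y_t^a)^b = Y_t^{a+b}$, immediate from the definition of the bumped path. Setting $f_n(Y_t) := F_n(Y_t,0)$ and using this law, I compute
\[
f_n(Y_t^h) = \int_{\bR}\rho_n(\xi)\,f\bigl((Y_t^h)^{-\xi}\bigr)\,d\xi = \int_{\bR}\rho_n(\xi)\,f(Y_t^{h-\xi})\,d\xi = F_n(Y_t,h),
\]
so that the capital-letter object $F_n(Y_t,h)$ is genuinely $f_n$ evaluated at the bumped path. Then, again by the composition law,
\[
\Delta_x f_n(Y_t^h) = \lim_{\eta \to 0}\frac{f_n(Y_t^{h+\eta}) - f_n(Y_t^h)}{\eta} = \lim_{\eta\to 0}\frac{F_n(Y_t,h+\eta) - F_n(Y_t,h)}{\eta} = \partial_h F_n(Y_t,h),
\]
the limit existing by the smoothness established above; this is precisely $\Delta_x F_n(Y_t,h) = \partial_h F_n(Y_t,h)$ in the paper's capital-letter convention. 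The general case follows by induction on $k$: the invariant $(\Delta_x^{(k)}f_n)(Y_t^h) = \partial_h^{(k)}F_n(Y_t,h)$ propagates because the identical difference-quotient computation applies to the functional $\Delta_x^{(k-1)}f_n$, whose bumped-path values are the smooth function $\partial_h^{(k-1)}F_n(Y_t,\cdot)$. The main subtlety throughout is keeping $\Delta_x$---a limit over path deformations---correctly aligned with the ordinary derivative $\partial_h$; once the composition law is in hand this alignment is automatic, and no genuinely new analytic difficulty arises beyond classical mollification.
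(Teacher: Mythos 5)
Your proof is correct and follows essentially the same route as the paper's: both rest on the identity $d_{\Lambda}(Y_t^{h_1},Y_t^{h_2})=|h_1-h_2|$ for continuity and local integrability, and on the bump-translation law $F(Y_t^z,h)=F(Y_t,h+z)$, which gives $F_n(Y_t^z,h)=F_n(Y_t,h+z)$ and hence $\Delta_x^{(k)}F_n=\partial_h^{(k)}F_n$. The only difference is that you spell out the difference-quotient computation, the induction on $k$, and the differentiation under the integral sign, steps the paper leaves implicit after the word ``Thus.''
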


\begin{proof}
Notice that since the functional $f$ is $\Lambda$-continuous,  $F(Y_t,\cdot)$ is then continuous for fixed $Y_t \in \Lambda$, because $d_{\Lambda}(Y_t^{h_1}, Y_t^{h_2}) = |h_1 - h_2|$. This implies $F(Y_t,\cdot)$ is locally integrable, and therefore the mollification $F_n$ is well-defined. Notice now that $F(Y_t^z,h) = F(Y_t,h+z)$ and then
\begin{align}
F_n(Y_t^z,h) &= \int_{\bR} \rho_n(h - \xi) F(Y_t,\xi+z) d\xi \label{eq:bump_mollification}\\
&= \int_{\bR} \rho_n(h - (\xi - z)) F(Y_t,\xi) d\xi = F_n(Y_t, h+z). \nonumber
\end{align}
Thus, for any $k \in \bN$,
$$\Delta_x^{(k)} F_n(Y_t,h) = \partial_h^{(k)} F_n(Y_t,h).$$
\end{proof}

We would like also to point it out that a particular mollification of the running maximum was considered in \cite{fito_dupire} to derive a pathwise version of the famous formula due to L\'evy:
$$\max_{0 \leq s \leq t} x_s = x_0 + L^{x-\barm}\left(t,0\right),$$
where $\barm$ is the running maximum process and $x$ is a continuous semimartingale. The reader is forwarded to \cite[Chapter 3 and Chapter 6]{Karatzas1988} for more details on results regarding the relations between local time and the running maximum in the Brownian motion case.

\subsection{$\Lambda$-Continuity of the Mollified Functionals and its Derivatives}

In this section we will study the relation of continuity of $f$ and of its mollification $F_n$.

We have already seen that, if $F(Y_t,\cdot)$ is locally integrable for any given $Y_t \in \Lambda$, then $F_n(Y_t,\cdot)$ is infinitely differentiable in $\bR$, and therefore it is also continuous. However, differentiability in the functional sense does not imply $\Lambda$-continuity. Hence, it is necessary to consider a slightly stronger assumption on the continuity of the functional $f$ in order to be able to conclude the $\Lambda$-continuity of $F_n$. We will thus consider the following stronger criterion:

\begin{definition}\label{defi:assump_continuity}

We say that $f$ is \textit{$\Lambda$-$\phi$-equicontinuous} if there exists $\phi: \bR \longrightarrow \bR$ positive and locally integrable depending only on $f$ such that $\forall \ \eps > 0, \ \forall \ Y_t \in \Lambda, \ \exists \ \delta > 0$,
\begin{align}\label{eq:d_Lambda_equicont}
\ d_{\Lambda}(Y_t, Z_s) < \delta \Rightarrow |F(Y_t, \xi) - F(Z_s, \xi)| < \eps \phi(\xi), \ \forall \ \xi \in \bR.
\end{align}
Notice that $\Lambda$-$\phi$-equicontinuity implies that $f$ is $\Lambda$-continuous. Moreover, if $\phi \equiv 1$, then the family of functionals $\{F(\cdot,\xi)\}_{\xi \in\bR}$ is $\Lambda$-equicontinuous.

\end{definition}

The weakening of this assumption could be pursued, but it is not in the scope of this work.

\begin{proposition}\label{prop:continuity_mollifier}
Suppose $f$ is $\Lambda$-$\phi$-equicontinuous. Then, for any $n \in \bN$ and $h \in \bR$, $F_n(\cdot,h)$ and $\Delta_x^{(k)} F_n(\cdot,h)$ are $\Lambda$-continuous, for any $k \in \bN$.
\end{proposition}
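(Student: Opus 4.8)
The plan is to exploit the convolution structure of $F_n$ together with the uniform-in-$\xi$ control furnished by $\Lambda$-$\phi$-equicontinuity. Fix $n \in \bN$, $h \in \bR$, $k \geq 0$, and a path $Y_t \in \Lambda$ at which we want to verify continuity. By the preceding proposition, $\Delta_x^{(k)} F_n(Z_s,h) = \partial_h^{(k)} F_n(Z_s,h)$, and differentiating the convolution $F_n(Z_s,h) = \int_{\bR} \rho_n(h-\xi) F(Z_s,\xi)\,d\xi$ under the integral sign $k$ times (legitimate since $\rho_n \in C_c^\infty(\bR)$ and $F(Z_s,\cdot)$ is continuous, hence locally bounded) yields
$$\Delta_x^{(k)} F_n(Z_s,h) = \int_{\bR} \rho_n^{(k)}(h-\xi) F(Z_s,\xi)\,d\xi,$$
the case $k=0$ returning $F_n$ itself. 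Thus every case $k \geq 0$ can be handled by a single estimate, and proving $\Lambda$-continuity of $F_n(\cdot,h)$ requires no separate argument.

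Next I would fix $\eps > 0$ and invoke $\Lambda$-$\phi$-equicontinuity at $Y_t$ to obtain $\delta > 0$ such that $d_\Lambda(Y_t,Z_s) < \delta$ forces $|F(Y_t,\xi) - F(Z_s,\xi)| < \eps\,\phi(\xi)$ for \emph{every} $\xi \in \bR$. Subtracting the two convolution integrals and passing the absolute value inside gives
$$\bigl| \Delta_x^{(k)} F_n(Y_t,h) - \Delta_x^{(k)} F_n(Z_s,h) \bigr| \leq \int_{\bR} \bigl| \rho_n^{(k)}(h-\xi) \bigr|\, \bigl| F(Y_t,\xi) - F(Z_s,\xi) \bigr|\, d\xi \leq \eps \int_{\bR} \bigl| \rho_n^{(k)}(h-\xi) \bigr|\, \phi(\xi)\, d\xi.$$
The point is that the remaining integral is a finite constant depending only on $n,k,h$ (through $\rho_n$ and $\phi$) and not on $\eps$ or on the competing path $Z_s$; this is precisely where the factor $\phi(\xi)$ built into the equicontinuity definition is used, since it lets the single inequality $|F(Y_t,\xi)-F(Z_s,\xi)| < \eps\,\phi(\xi)$ serve uniformly across all $\xi$ in the support of $\rho_n^{(k)}$.

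The only genuine thing to check — and the step I expect to need the most care — is the finiteness of $C_{n,k}(h) := \int_{\bR} | \rho_n^{(k)}(h-\xi) |\, \phi(\xi)\, d\xi$. This holds because $\rho_n^{(k)} \in C_c^\infty(\bR)$ is bounded and supported in a fixed compact set, so $\xi \mapsto | \rho_n^{(k)}(h-\xi) |$ is bounded and compactly supported, while $\phi$ is positive and locally integrable by hypothesis; the product is therefore integrable. Given $C_{n,k}(h) < \infty$, a prescribed tolerance $\eta > 0$ is met by running the argument with $\eps = \eta / (1 + C_{n,k}(h))$, which produces a $\delta$ for which $d_\Lambda(Y_t,Z_s) < \delta$ makes the difference smaller than $\eta$ (the $1+$ harmlessly covers the degenerate case $C_{n,k}(h)=0$). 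Since $Y_t$ was arbitrary, this establishes $\Lambda$-continuity of $\Delta_x^{(k)} F_n(\cdot,h)$ for every $k \geq 0$, with $k=0$ giving the $\Lambda$-continuity of $F_n(\cdot,h)$.
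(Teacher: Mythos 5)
Your proof is correct and follows essentially the same route as the paper: bound the difference of convolutions by pulling the absolute value inside, invoke $\Lambda$-$\phi$-equicontinuity uniformly in $\xi$, and use that the kernel $\partial_h^{(k)}(\rho_n(h-\cdot))$ is compactly supported while $\phi$ is locally integrable, with the rescaling of $\eps$ done at the end rather than (as in the paper) built into the choice of $\delta$. If anything, your version is slightly tidier, since by writing $|\rho_n^{(k)}(h-\xi)|$ explicitly you handle the signed derivative kernels (which necessarily change sign for $k \geq 1$) that the paper's ``same argument'' remark leaves implicit.
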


\begin{proof}
By Equation (\ref{eq:mollifier_R}), we see
\begin{align*}
|F_n(Y_t,h) - F_n(Z_s,h)| \leq \int_{\bR} \rho_n(h-\xi)|F(Y_t,\xi) - F(Z_s,\xi)| d\xi.
\end{align*}
Hence, fixing $\eps > 0$, $n \in \bN$ and $h \in \bR$, and choosing $\delta > 0$ from the $\Lambda$-$\phi$-equicontinuity of $f$ with $\eps$ equals
$$\frac{\eps}{\int_{\bR} \rho_n(h-\xi) \phi(\xi) d\xi},$$
we have, for $Y_t, Z_s \in \Lambda$ satisfying $d_{\Lambda}(Y_t, Z_s) < \delta$,
\begin{align*}
|F_n(Y_t,h) - F_n(Z_s,h)| &\leq \int_{\bR} \rho_n(h-\xi)|F(Y_t,\xi) - F(Z_s,\xi)| d\xi < \eps.
\end{align*}
Therefore, we conclude that $F_n(\cdot,h)$ is $\Lambda$-continuous for any $n \in \bN$ and $h \in \bR$. Considering now the derivatives of $F_n$, we see
\begin{align*}
\Delta_x^{(k)} F_n(Y_t,h) = \partial_h^{(k)} F_n(Y_t,h) = \int_{\bR} \partial_h^{(k)}(\rho_n(h-\xi)) F(Y_t,\xi) d\xi,
\end{align*}
and since $\partial_h^{(k)}(\rho_n(h-\cdot))$ are in $C^{\infty}_c(\bR)$, the same argument employed above for the $\Lambda$-continuity of $F_n$ can be used to conclude the $\Lambda$-continuity of $\Delta_x^{(k)} F_n(\cdot,h)$.
\end{proof}

\subsection{The Issue with the Time Derivative}\label{sec:time_derivative}

As we have seen, the functional $F_n$ is smooth with respect to the space variable. In this section, we will study the question of the existence of the  time functional derivative. Notice that
$$F_n(Y_{t,\delta t},h) = \int_{\bR} \rho_n(h - \xi) F(Y_{t,\delta t},\xi) d\xi.$$
When is $F_n$ time functional differentiable as in Equation (\ref{eq:time_deriv})?


\begin{definition}\label{def:time_derivative_mollifier}
We say a functional $f$ is \textit{$h$-time functional differentiable} if $F(\cdot, h)$ is time functional differentiable for every $h \in \bR$, i.e.
\begin{align}
\Delta_t F(Y_t, h) = \lim_{\delta t \to 0^+} \frac{F(Y_{t,\delta t},h) - F(Y_t,h)}{\delta t} = \lim_{\delta t \to 0^+} \frac{f((Y_{t,\delta t})^h) - f(Y_t^h)}{\delta t}, \label{eq:time_derivative_mollifier}
\end{align}
exists for every $Y_t \in \Lambda$ and $h \in \bR$.
\end{definition}

We are then ready to answer the previous question:

\begin{proposition}\label{prop:time_derivative_mollifier}
If $f$ is $h$-time functional differentiable, then $\Delta_t F_n$ exists,
$$\Delta_t F_n(Y_t,h) = (\Delta_t F)_n(Y_t,h),$$
for any $Y_t \in \Lambda$ and $h \in \bR$, and
$$\Delta_x^{(k)} \Delta_t F_n(Y_t,h) = \partial_h^{(k)} (\Delta_t F)_n(Y_t,h).$$
Moreover, if $\Delta_t F$ is $\Lambda$-$\phi$-equicontinuous, then $\Delta_t F_n(\cdot,h)$ is $\Lambda$-continuous, and hence in $\bC^{0, \infty}$.
\end{proposition}

\begin{proof}
For fixed $Y_t \in \Lambda$ and $\xi \in \bR$, define $\psi(\delta t) = F(Y_{t,\delta t}, \xi)$. By Definition \ref{def:time_derivative_mollifier}, $\psi \in C^1(\bR_+)$ and therefore,
$$\Delta_t F_n(Y_t,h) = \int_{\bR} \rho_n(h - \xi) \Delta_t F(Y_t,\xi) d\xi = (\Delta_t F)_n(Y_t, \xi).$$
Since $\Delta_t F$ is $\Lambda$-$\phi$-equicontinuous and $\Delta_t F_n(Y_t,h) = (\Delta_t F)_n(Y_t, \xi)$, by Proposition \ref{prop:continuity_mollifier}, we conclude that $\Delta_t F_n$ is $\Lambda$-continuous. 
\end{proof}

\begin{remark}\label{rmk:lie_bracket}
We would like to point out the similarity of the Equation (\ref{eq:time_derivative_mollifier}) and the limit characterization of the Lie bracket given in \cite[Lemma 3.2]{fito_saporito_greeks}:
\begin{align*}
[\Delta_t, \Delta_x] f(Y_t) = \lim_{\delta t \to 0^+ \atop h \to 0} \frac{ f((Y_{t,\delta t})^h) - f((Y_t^h)_{t,\delta t})}{h \delta t}. 
\end{align*}
However, it is obvious that Definition \ref{def:time_derivative_mollifier} \textit{does not require} the functional $f$ to be locally weakly path-dependent ($[\Delta_t, \Delta_x] f = 0$, as defined in \cite{fito_saporito_greeks}). Definition \ref{def:time_derivative_mollifier} is indeed just a technicality and encompasses many interesting functionals. For example, the running integral ($f(Y_t) = \int_0^t y_s ds$) satisfies Assumption \ref{def:time_derivative_mollifier} and it is not locally weakly path-dependent.
\end{remark}

\subsubsection{Time and Joint Mollification}

We will not pursue this here, but it is important to mention two different mollification possibilities:\\ \\
\textbf{(i) Time Mollification}:
\begin{align}
F_n(Y_t,\delta t) &= \int_{\bR} \rho_n(\delta t - \eta) f(Y_{t,\eta}) d\eta. \label{eq:time_mollifier}
\end{align}
\textbf{(ii) Joint Mollification}:
\begin{align}
F_n(Y_t,\delta t,h) &= \int_{\bR} \int_{\bR} \rho_n(h - \xi) \rho_n(\delta t - \eta) f((Y_{t,\eta})^{\xi}) d\xi d\eta. \label{eq:joint_mollifier}
\end{align}

An obvious issue with the joint mollification is the choice between $f((Y_{t,\eta})^{\xi})$ and $f((Y_t^{\xi})_{t,\eta})$; both would be initially valid choices. This is not a problem when we restrict ourselves to the path-independent case: $f(Y_t) = h(t,y_t)$. However, as it was noted in \cite{fito_saporito_greeks}, the different ordering of bump and flat extension is a very important aspect of the functional It\^o calculus.

Additionally, as it happened in the aforesaid reference in a different circumstance, the Lie bracket of the operators $\Delta_t$ and $\Delta_x$ would probably play an important role if the joint mollification were used. 

\subsection{Integration by Parts} \label{sec:int_by_parts}

We will now derive some integration by parts computations that will be useful later in the proof of the functional Meyer-Tanaka formula.\\

First some definitions. For any $Y_t \in \Lambda$ and $y \in \bR$,
\begin{align}\label{eq:path_change_last_value}
Y_{t-}^y(u) = \left\{
\begin{array}{ll}
  y_u,      &\mbox{ if } \quad 0 \leq u < t, \\
  y,  &\mbox{ if } \quad u = t.
\end{array}
\right.
\end{align}
Notice that $Y_{t-}^y = Y_t^{y - y_t} \in \Lambda_t$ and it is different than $(Y_{t-})^y = Y_t^{y - y_t + y_{t-}}$. Moreover, define
\begin{align}
\cF(Y_t,y) = f(Y_{t-}^y). \label{eq:cF}
\end{align}

The definition of the function $\cF$ above serves two purposes. Firstly, alleviates notation. Secondly, it helps us take derivatives with respect to the $Y_t$ and the last value $y$ separately. Capital calligraphic letters will always be used as above meaning that it will denote a function with domain $\Lambda \times \bR$ where the first variable is the path and the second variable is the value will replace the last value of the path. We will keep this notation through out the paper.\\

We start by noticing that, for any function $q: \bR^2 \longrightarrow \bR$ regular enough for the computations to follow, the subsequent identity is obviously true:
\begin{align*}
\int_{\bR} \left( \int_0^t \cF(Y_s,y) d_s q(s,y) \right) dy &= \int_{\bR} \cF(Y_t,y) q(t,y) dy \\
&- \int_{\bR} \left( \int_0^t \partial_t \cF(Y_s,y) q(s,y)ds \right) dy,
\end{align*}
where $\cF$ is given by Equation (\ref{eq:cF}) and
$$\partial_t \cF(Y_s,y) = \lim_{u \to s} \frac{\cF(Y_s,y) - \cF(Y_u,y)}{s-u},$$
the usual time derivative of a function. Let us now verify that this derivative exists under certain regularity assumptions. Notice that $\cF(Y_s,y)$ does not depend on the last value of the path $Y_s$, and hence $\Delta_x \cF(Y_s,y) = \Delta_{xx} \cF(Y_s,y) = 0$. So, if $f$ satisfies Definition \ref{def:time_derivative_mollifier}, $\Delta_t \cF(Y_s,y)$ exists. Assuming $Lambda$-continuity of $\cF(\cdot, y)$ and $\Delta_t \cF(\cdot, y)$ implies that $\cF(\cdot, y) \in \bC^{1,2}$.  Then, one can show, by the functional It\^o formula, Theorem \ref{thm:fif}, that for any continuous semimartingale $x$,
$$\cF(X_s,y) = \cF(X_u,y) + \int_u^t \Delta_t \cF(X_r,y)dr,$$
which implies that
$$\partial_t \cF(X_s,y) = \Delta_t \cF(X_s,y), \ \forall \ s \geq 0, \quad \bP-\mbox{a.s.}$$
Moreover,
\begin{align}
\cF(Y_t,y+h)  = F(Y_{t-}^y,h) = f(Y_{t-}^{y+h}),\label{eq:relation_f_F_cF}
\end{align}
and then
$$\partial_y^{(k)} \cF(Y_t,y) = \partial_h^{(k)} F(Y_{s-}^y,0) = \Delta_x^{(k)} f(Y_{s-}^y) .$$

Before proceeding, we would like to comment on the commutation of $\partial_t$ and $\partial_y$. It is well-known now that $\Delta_t$ and $\Delta_x$ do not commute. However, we do not experience a similar problem here. $\partial_t$ and $\partial_y$ do commute: $\partial_y \partial_t \cF(Y_t,y) = \partial_t \partial_y \cF(Y_t,y)$, as one can easily verify by direct computation and assuming these derivatives exist and are continuous. The reason is that in the definition of $\cF(Y_t, y)$ it is implied that the bump $y$ happens always at the end of the path $Y_t$. Therefore, there is no ambiguity in the order of the time perturbation and the bump that we experience in the case of $\Delta_x$ and $\Delta_t$.\\

If $g \in C^1_c(\bR)$ and $\partial_{yy} \cF(Y_t,y)$ exists, then
\begin{align}
\int_{\bR}  g(y) \Delta_{xx} f(Y_{t-}^y) dy &= \int_{\bR} g(y) \partial_{yy} \cF(Y_t,y) dy \label{eq:IBP_second_der_measure}\\
&=  -\int_{\bR} g'(y) \partial_y \cF(Y_t,y) dy \nonumber
\end{align}
Furthermore, if we consider $q: \bR^2 \longrightarrow \bR$ smooth with compact support and assume $\partial_t \partial_{yy} \cF(Y_t,y)$ exists, we find
\begin{align*}
&\int_0^t \int_{\bR}  \partial_t \Delta_{xx} f(Y_{s-}^y) q(s,y) dy ds = \int_0^t \int_{\bR} \partial_t \partial_{yy} \cF(Y_s,y) q(s,y)  dyds\\
&= \int_0^t \int_{\bR} \partial_y \partial_t \partial_y \cF(Y_s,y) q(s,y) dy ds \\
&= - \int_0^t \int_{\bR} \partial_t \partial_y \cF(Y_s,y) \partial_y q(s,y) dy ds \\
&= -  \left. \int_{\bR} \partial_y \cF(Y_s,y) \partial_y q(s,y) \right|_0^t dy + \int_0^t \int_{\bR} \partial_y \cF(Y_s,y)  \partial_{sy} q(s,y) dy ds,
\end{align*}
where
$$\left. \vphantom{\int} \psi(s,y) \right|_0^t = \psi(t,y) - \psi(0,y).$$

\section{Functional Meyer-Tanaka Formula}\label{sec:tanaka_formula}

\subsection{Local Time}

The local time of the process $x$ at level $y$, denoted by $L^x(s,y)$, is defined as the limit in probability:
$$L^x(t,y) = \lim_{\eps \to 0^+} \frac{1}{4\eps} \int_0^t 1_{[y-\eps,y+\eps]}(x_s) d\langle x \rangle_s.$$
A very important identity related to the local time is the \textit{occupation times formula}, \cite[Corollary 1.6, Chapter VI]{Revuz2004}, which says that if $\varphi : \bR_+ \times \bR \longrightarrow \bR$ is bounded and measurable, then
\begin{align}
\int_0^t \varphi(s,x_s) d\langle x \rangle_s = 2 \int_{\bR} \left( \int_0^t \varphi(s,y) d_s L^x(s,y) \right) dy, \ \forall \ t \geq 0, \quad \bP-\mbox{a.s.} \label{eq:occup_time}
\end{align}

The following extension of the occupation time formula will be fundamental in the following, see \cite[Exercise 1.15, Chapter VI]{Revuz2004}.

\begin{lemma}
For any bounded measurable function $\psi : \bR_+ \times \Omega \times \bR \longrightarrow \bR$,
\begin{align}
\hspace{-0.1cm}\int_0^t \psi(s,\omega,x_s) d\langle x \rangle_s = 2 \int_{\bR} \left( \int_0^t \psi(s,\omega,y) d_s L^x(s,y) \right) dy, \ \forall \ t \geq 0, \quad \bP-\mbox{a.s. } \label{eq:occup_time_classical}
\end{align}

\end{lemma}  

\begin{proof}
By Equation (\ref{eq:occup_time}), for any $\varphi : \bR_+ \times \bR \longrightarrow \bR$ bounded and measurable, there exists $\Omega_\varphi \in \fF$ with $\bP(\Omega_\varphi) = 1$ such that, for each $\omega \in \Omega_\varphi$,
$$\int_0^t \varphi(s,x_s(\omega)) d\langle x \rangle(\omega)_s = 2 \int_{\bR} \left( \int_0^t \varphi(s,y) d_s L^x(\omega)(s,y)\right) dy, \ \forall \ t \geq 0,$$
where $\langle x \rangle(\omega)$ and $L^x(\omega)$ are the realizations of the quadratic variation and the local time, respectively. Moreover, since $\psi(\cdot,\omega,\cdot)$ is bounded and measurable, it can be uniformly approximated by simple functions of the form:
$$\psi_n(t,\omega,y) = \sum_{k=1}^{b_n} a_{k,n}(t,y) 1_{A_{k,n}}(\omega).$$
Define now $\Omega_\psi = \bigcap_{n=1}^{+\infty} \bigcap_{k=1}^{b_n}\Omega_{k,n}$, where $\Omega_{k,n}$ is defined as $\Omega_\varphi$ for $\varphi = a_{k,n}$. Therefore, $\bP(\Omega_\psi) = 1$ and, by the occupation time formula for functions on $\bR_+ \times \bR$, we find, for $\omega \in \Omega_\psi$,
\begin{align*}
\int_0^t \psi_n(s,\omega,x_s(\omega)) d\langle x(\omega) \rangle_s &= \sum_{k=1}^{b_n} 1_{A_{k,n}}(\omega) \int_0^t a_{k,n}(s,x_s(\omega)) d\langle x(\omega) \rangle_s \\
&= \sum_{k=1}^{b_n} 1_{A_{k,n}}(\omega) 2 \int_{\bR} \left( \int_0^t a_{k,n}(s,y) d_s L^x(\omega)(s,y) \right) dy \\
&= 2 \int_{\bR} \left( \int_0^t \psi_n(s,\omega,y) d_s L^x(\omega)(s,y) \right) dy.
\end{align*}
Letting $n \to +\infty$ and using the uniformity of the convergence $\psi_n \to \psi$, we have found the desired result.
\end{proof}

For a given functional $f$, we would like to apply the proposition above to $\psi_f(s,\omega,y) = \cF(X_s(\omega),y)$, where $\cF$ is defined in Equation (\ref{eq:cF}). Then, for every functional $f$ such that $\psi_f$ above is bounded and measurable, we have
\begin{align}
\int_0^t f(X_s) d\langle x \rangle_s =  2 \int_{\bR} \left( \int_0^t \cF(X_s,y) d_s L^x(s,y) \right) dy. \label{eq:occup_time_functional}
\end{align}

\begin{example}[Running Integral]

Consider the running integral functional $f(Y_t) = \int_0^t y_u du$. We clearly have $\cF(Y_s,y) = f(Y_s)$ and moreover, we find
\begin{align*}
&\bullet \ \int_0^t f(X_s) d\langle x \rangle_s = \int_0^t \left(\int_0^s x_u du \right) d\langle x \rangle_s = \int_0^t (\langle x \rangle_t - \langle x \rangle_u) x_u du, \\ \\
&\bullet \ \int_{\bR} \left( \int_0^t \cF(X_s,y) d_s L^x(s,y) \right) dy = \int_{\bR} \left( \int_0^t \left(\int_0^s y_u du \right) d_s L^x(s,y) \right) dy \\
& \hskip 3.5cm  = \int_0^t \left(\int_{\bR} L^x(t,y) dy - \int_{\bR} L^x(u,y)dy \right)x_u du.
\end{align*}
Therefore, since
$$\langle x \rangle_t = 2 \int_{\bR} L^x(t,y) dy,$$
we verify Equation (\ref{eq:occup_time_functional}) for this particular example.

\end{example}

\subsection{Convergence Properties} \label{sec:conv_properties}

The idea behind the proof of the classical Meyer-Tanaka formula (see \cite{Karatzas1988} and \cite{generalized_ito_elworthy}, for example) is to apply It\^o formula to the smooth mollification of the function in consideration, let $n$ go to infinity to approximate the original function and then analyze the limit of all the terms of the It\^o formula. Having this strategy in mind, in this section we will investigate the convergence of certain quantities that will be important when proving the functional Meyer-Tanaka formula.

We firstly define the functional $f_n :\Lambda \longrightarrow \bR$ as
\begin{align}
f_n(Y_t) = F_n(Y_t,0) = \int_\bR \rho_n(\xi) F(Y_t,-\xi) d\xi, \label{eq:f_n}
\end{align}
where $F_n$ is the mollification of $F$ given in Equation (\ref{eq:mollifier_R}). 

\begin{remark}\label{rmk:bound_mollifier}

In what follows, we will explicitly use the fact that the mollifier $\rho$ has compact support. Without loss of generality, we may assume that its support is inside $[\rho_{\min}, \rho_{\max}]$, where $\rho_{\min} < 0$ and $\rho_{\max} > 0$.

\end{remark}

\begin{proposition}\label{prop:conv_properties}
\begin{enumerate}[leftmargin=0cm,itemindent=.5cm,labelwidth=\itemindent,labelsep=0cm,align=left]

Assume $\partial_h^- F(Y_t, \cdot)$ exists. The following facts hold true:\\

\item For each $Y_t \in \Lambda$, if $F(Y_t,\cdot)$ and $\partial_h^- F(Y_t,\cdot)$ are continuous at 0, then
\begin{align*}
&\lim_{n \to +\infty} f_n(Y_t) = f(Y_t) ,\\
&\lim_{n \to +\infty} \Delta_x f_n(Y_t) = \lim_{n \to +\infty} \partial_h F_n(Y_t,0) = \partial_h^- F(Y_t,0) = \Delta_x^- f(Y_t).
\end{align*}

\item If $f$ satisfies Definition \ref{def:time_derivative_mollifier}, we have
\begin{align*}
&\lim_{n \to +\infty} \Delta_t f_n(Y_t) = \Delta_t f(Y_t) ,\\
&\lim_{n \to +\infty} \int_0^t \Delta_t f_n(Y_s) ds = \int_0^t \Delta_t f(Y_s) ds,
\end{align*}
for any $Y_t \in \Lambda$.\\

\item If $(\partial_h^- F(X_s,-h))_{s \in [0,T]}$ is bounded in $h \in [\rho_{\min}, \rho_{\max}]$ by an $x$-integrable process, then
$$\lim_{n \to +\infty} \int_0^t \Delta_x f_n(X_s) dx_s = \int_0^t \Delta_x^- f(X_s) dx_s \quad \mbox{ u.c.p.},$$
where u.c.p. means \textit{uniformly on compacts in probability}.

\end{enumerate}
\end{proposition}

\begin{proof}
\hspace{1cm}
\begin{enumerate}[leftmargin=0cm,itemindent=.5cm,labelwidth=\itemindent,labelsep=0cm,align=left]

\item It follows easily from standard results in mollification theory.\\

\item The first limit follows from Proposition \ref{prop:time_derivative_mollifier}. Moreover, one can easily notice
\begin{align*}
\int_0^t \Delta_t f_n(Y_s) ds &= \int_0^t \int_{\bR} \rho_n(-\xi) \Delta_t F(Y_s,\xi) d\xi ds \\
&= \int_{\bR}  \rho_n(-\xi) \left( \int_0^t  \Delta_t F(Y_s,\xi) ds \right) d\xi.
\end{align*}
Therefore,
$$\lim_{n \to +\infty} \int_0^t \Delta_t f_n(Y_s) ds = \int_0^t \Delta_t F(Y_s,0) ds = \int_0^t \Delta_t f(Y_s) ds.$$

\item Notice that
\begin{align}
\Delta_x f_n(Y_t) = \int_{\rho_{\min}}^{\rho_{\max}} \rho(\xi)\partial_h^- F\left(Y_t, - \frac{\xi}{n} \right) d\xi. \label{eq:deriv_Fn_int}
\end{align}
The boundedness assumptions means there exists an $x$-integrable process $(\psi_s)_{s \in [0,T]}$ such that
$$\max_{h \in [\rho_{\min}, \rho_{\max}]} \left|\partial_h^- F(X_s,-h) \right| \leq \psi_s.$$
Hence,
\begin{align*}
|\Delta_x f_n(Y_t)| &\leq \int_{\rho_{\min}}^{\rho_{\max}} \rho(\xi) \left|\partial_h^- F\left(Y_t, - \frac{\xi}{n} \right) \right| d\xi\leq  \int_{\rho_{\min}}^{\rho_{\max}} \rho(\xi) \psi_s d\xi = \psi_s.
\end{align*}
Therefore, by the Dominated Convergence Theorem for stochastic integrals, see \cite[Theorem 32, Chapter IV]{Protter2005}, we have the desired convergence.
\end{enumerate}
\end{proof}

\subsection{The Functional Meyer-Tanaka Formula}

We start this section by stating the assumptions on the functional $f$ such that the Meyer-Tanaka formula will hold.

\begin{hypotheses}\label{hypo:hypotheses}
\hspace{1cm}
\begin{enumerate}

\item $f$ $h$-time functional differentiable as in Definition \ref{def:time_derivative_mollifier};\\

\item $f$ and $\Delta_t F$ are $\Lambda$-$\phi$-equicontinuous as in Definition \ref{defi:assump_continuity};\\

\item $\partial_y^- \cF(Y_t,y)$ exists and is of bounded variation for $(t,y)$ jointly and for $y$ separately, for any $Y_t \in \Lambda$.\\

\item $(\partial_h^- F(X_t,-h))_{t \in [0,T]}$ is bounded in $h \in [\rho_{\min}, \rho_{\max}]$ by an $x$-integrable process.

\end{enumerate}

\end{hypotheses}

We are ready then to prove the main result of this paper.

\begin{theorem}[Functional Meyer-Tanaka Formula]\label{thm:meyer_tanaka_form}
Suppose $f$ satisfies Hypotheses \ref{hypo:hypotheses} and let $x$ be a continuous semimartingale. Then, the \textit{functional Meyer-Tanaka formula} holds
\begin{align}
f(X_t) &= f(X_0) + \int_0^t \Delta_t f(X_s) ds + \int_0^t \Delta_x^- f(X_{s}) dx_s \label{eq:meyer_tanaka_form} \\
&+ \int_{\bR}  L^x(t,y) d_y\partial_y^- \cF(X_t,y) - \int_0^t \int_{\bR} L^x(s,y) d_{s,y}\partial_y^- \cF(X_s,y) \quad \bP-\mbox{a.s.} \nonumber
\end{align}

\end{theorem}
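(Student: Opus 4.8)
The plan is to start from the preliminary result of Section~\ref{sec:prelim_result}, which already gives Equation~(\ref{eq:ito_formula_A}) with $A_t^f$ realized as the u.c.p.\ limit of $A_t^n = \int_0^t \Delta_{xx} f_n(X_s)\, d\langle x\rangle_s$. Since the first three terms of (\ref{eq:ito_formula_A}) coincide with those of (\ref{eq:meyer_tanaka_form}), the whole task reduces to identifying $\tfrac12 A_t^f$ with the two local-time terms. The first step is to rewrite $A_t^n$ via the functional occupation time formula (\ref{eq:occup_time_functional}) applied to the functional $\Delta_{xx} f_n$: because $f_n \in \bC^{1,\infty}$ and $\Delta_{xx} f_n$ is $\Lambda$-continuous, the corresponding $\psi$ is measurable and locally bounded along the semimartingale path, and using the identity $\partial_{yy}\cF_n(Y_s,y) = \Delta_{xx} f_n(Y_{s-}^y)$ from Section~\ref{sec:int_by_parts} one obtains
$$A_t^n = 2\int_{\bR}\left(\int_0^t \partial_{yy}\cF_n(X_s,y)\, d_s L^x(s,y)\right) dy.$$

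Next I would integrate by parts in both variables so that the derivatives land on $\cF_n$ in the form $\partial_y\cF_n$. For fixed $y$ the process $s \mapsto \cF_n(X_s,y)$ does not depend on the terminal value of the path, so $\Delta_x\cF_n = 0$ and Theorem~\ref{thm:fif} shows it is absolutely continuous in $s$ with $d_s\cF_n(X_s,y) = \Delta_t\cF_n(X_s,y)\,ds$; the same holds for $\partial_{yy}\cF_n(X_s,y)$. Integrating by parts in $s$ against the increasing, hence finite-variation, function $L^x(\cdot,y)$ (with $L^x(0,y)=0$) yields
$$\int_0^t \partial_{yy}\cF_n(X_s,y)\, d_s L^x(s,y) = L^x(t,y)\,\partial_{yy}\cF_n(X_t,y) - \int_0^t L^x(s,y)\,\partial_s\partial_{yy}\cF_n(X_s,y)\, ds.$$
Recognizing $\partial_{yy}\cF_n(X_t,y)\,dy = d_y\partial_y\cF_n(X_t,y)$ and $\partial_s\partial_{yy}\cF_n(X_s,y)\,ds\,dy = d_{s,y}\partial_y\cF_n(X_s,y)$ (legitimate here since $\partial_t$ and $\partial_y$ commute on $\cF_n$, as noted in Section~\ref{sec:int_by_parts}), and integrating over $y\in\bR$, I would arrive at
$$A_t^n = 2\int_{\bR} L^x(t,y)\, d_y\partial_y\cF_n(X_t,y) - 2\int_0^t\int_{\bR} L^x(s,y)\, d_{s,y}\partial_y\cF_n(X_s,y).$$

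Finally I would let $n\to\infty$. By the convergence properties of Section~\ref{sec:conv_properties}, for each $y$ the quantity $\partial_y\cF_n(X_t,y) = \Delta_x f_n(X_{t-}^y)$ increases in $n$ to the non-decreasing, left-continuous function $\partial_y^-\cF(X_t,y) = \Delta_x^- f(X_{t-}^y)$; since the approximants $\partial_y\cF_n(X_t,\cdot)$ are themselves non-decreasing with a non-decreasing limit, their Lebesgue--Stieltjes measures converge weakly, $d_y\partial_y\cF_n(X_t,\cdot) \rightharpoonup d_y\partial_y^-\cF(X_t,\cdot)$. As $x$ is a.s.\ bounded on $[0,t]$, $L^x(t,\cdot)$ is continuous with compact support, so the first term converges to $2\int_{\bR} L^x(t,y)\, d_y\partial_y^-\cF(X_t,y)$. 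If the analogous convergence is established for the mixed term, then comparing the limit of the displayed expression with the u.c.p.\ limit $A_t^n \to A_t^f$ and using uniqueness of limits forces
$$\tfrac12 A_t^f = \int_{\bR} L^x(t,y)\, d_y\partial_y^-\cF(X_t,y) - \int_0^t\int_{\bR} L^x(s,y)\, d_{s,y}\partial_y^-\cF(X_s,y),$$
which is precisely (\ref{eq:meyer_tanaka_form}).

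The main obstacle is this second, mixed term. In contrast with the pure-$y$ integral at the fixed time $t$, the quantity $\int_0^t\int_{\bR} L^x(s,y)\, d_{s,y}\partial_y^-\cF(X_s,y)$ is a second-order Lebesgue--Stieltjes integral in the joint variable $(s,y)$ against the merely left-continuous, locally finite-variation integrator $\partial_y^-\cF$, taken in the sense of \cite{tanaka_time_dependent_elworthy}. I would need to show that the smooth mixed measures $d_{s,y}\partial_y\cF_n(X_s,\cdot)$ converge weakly to $d_{s,y}\partial_y^-\cF(X_s,\cdot)$ on compact rectangles and that integration of the continuous, compactly supported $L^x$ against them passes to the limit. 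The delicate points are the well-definedness of the mixed integrator (which is exactly where the convexity in Hypotheses~\ref{hypo:hypotheses} enters, ensuring $\partial_y^-\cF$ is monotone in $y$ and of locally bounded variation) and the interchange of the $s$- and $y$-limiting operations with the a.s.\ regularity of the local time.
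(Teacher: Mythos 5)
Your proposal follows the same skeleton as the paper's proof: reduce to identifying $\tfrac{1}{2}A_t^f$ via the preliminary result (\ref{eq:ito_formula_A}), rewrite $A_t^n$ with the functional occupation time formula (\ref{eq:occup_time_functional}) and the identity $\Delta_{xx}f_n(X_{s-}^y)=\partial_{yy}\cF_n(X_s,y)$, integrate by parts in $s$ against $L^x(\cdot,y)$, and pass to the limit using the monotone convergence $\partial_y\cF_n\to\partial_y^-\cF$. Up to your displayed formula for $A_t^n$ the computation coincides with the paper's.

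The genuine gap is in the limiting step. You justify convergence of the first term by asserting that $L^x(t,\cdot)$ is continuous; for a general continuous semimartingale this is false --- as the paper itself recalls in the introduction, local time is continuous and increasing in $t$ but only c\`adl\`ag in $y$ (the jumps in $y$ come from the finite-variation part of $x$). This is not a cosmetic point: weak convergence $d_y\partial_y\cF_n(X_t,\cdot)\rightharpoonup d_y\partial_y^-\cF(X_t,\cdot)$ yields convergence of integrals only for test functions that are continuous, or at least whose discontinuity set is null for the limit measure, and here the limit measure can have atoms (for the running maximum it is $\delta_{\barm(X_{t-})}$) which may sit exactly at a discontinuity of $L^x(t,\cdot)$. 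The paper avoids this by first establishing the $n\to\infty$ limits (\ref{eq:local_time_int_1})--(\ref{eq:local_time_int_2}) against \emph{smooth compactly supported} $g$ and $q$, and only then extending to $g=L^x(t,\cdot)$ and $q=L^x$ by the approximation arguments of \cite[Proof of Theorem 2.1]{tanaka_time_dependent_elworthy}, which are designed precisely for c\`adl\`ag integrands and two-parameter Lebesgue--Stieltjes integrators. That same extension argument is also what is missing for your mixed term, which you correctly flag as the main obstacle but leave open; so both of your limits, not just the mixed one, require the smooth-test-function detour followed by the Elworthy--Truman--Zhao extension, rather than a direct appeal to weak convergence of measures.
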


\begin{proof}

As we studied in Section \ref{sec:mollification}, $f_n$ belongs to $\bC^{1,\infty}$ and by the functional It\^o formula, Theorem \ref{thm:fif}, we find
\begin{align*}
f_n(X_t) = f_n(X_0) +  \int_0^t &\Delta_t f_n(X_s) ds + \int_0^t \Delta_x f_n(X_s) dx_s + \frac{1}{2} \int_0^t \Delta_{xx} f_n(X_s) d\langle x \rangle_s.
\end{align*}
Moreover, by what was shown in Proposition \ref{prop:conv_properties}, the following convergences hold
\begin{align}
&\lim_{n \to +\infty} f_n(Y_t) = f(Y_t), \label{eq:converg_F} \\
&\lim_{n \to +\infty} \int_0^t \Delta_t f_n(Y_s) ds = \int_0^t \Delta_t f(Y_s) ds,\label{eq:converg_int_DeltatF}\\
&\lim_{n \to +\infty} \int_0^t \Delta_x f_n(X_s) dx_s = \int_0^t \Delta_x^- f(X_s) dx_s \quad \mbox{ u.c.p.} \label{eq:converg_int_DeltaxF}
\end{align}
for any $Y_t \in \Lambda$. Let us now analyse the It\^o term. Remember $\cF$ is defined by Equation (\ref{eq:cF}). If we denote the mollification of $\cF$ with respect to the $y$ variable by $\cF_n$, we can easily conclude, by Equation (\ref{eq:relation_f_F_cF}),
$$F_n(Y_{t-}^y,h) = \cF_n(Y_t,y+h)$$
and then
\begin{align*}
\partial_h^{(k)}  F_n(Y_{t-}^y,h) = \partial_y^{(k)}  \cF_n(Y_t,y+h).
\end{align*}
In particular, $\Delta_x^{(k)} f_n(Y_{t-}^y) = \partial_h^{(k)}  F_n(Y_{t-}^y,0) = \partial_y^{(k)}  \cF_n(Y_t,y)$. So, by Equation (\ref{eq:occup_time_functional}),
\begin{align*}
&\frac{1}{2} \int_0^t \Delta_{xx} f_n(X_s) d\langle x \rangle_s = \int_{\bR} \left( \int_0^t \partial_{yy} \cF_n(X_s,y) d_s L^x(s,y) \right) dy\\
&= \int_{\bR} \partial_{yy}\cF_n(X_t,y) L^x(t,y) dy -  \int_0^t \int_{\bR}  \Delta_t \partial_{yy}\cF_n(X_s,y) L^x(s,y) dy ds,
\end{align*}

Hence, for $g: \bR \longrightarrow \bR$ and $q: \bR^2 \longrightarrow \bR$ smooth and compactly supported, we have, by the computations performed in Section \ref{sec:int_by_parts},
\begin{align}
&\int_{\bR} \partial_{yy} \cF_n(Y_t,y) g(y) dy = -\int_{\bR}  g'(y)  \partial_y \cF_n(Y_t,y)dy \label{eq:local_time_int_1}\\
&\stackrel{n \to +\infty}{\longrightarrow} -\int_{\bR}  g'(y) \partial_y^- \cF(Y_t,y)dy =\int_{\bR}  g(y) d_y\partial_y^- \cF(Y_t,y), \nonumber
\end{align}
and
\begin{align}
\int_0^t \int_{\bR} \Delta_t \Delta_{xx} &f_n(Y_{s-}^y) q(s,y) dy ds  =  -  \int_{\bR} \partial_y \cF_n(Y_s,y) \partial_y q(s,y) \Big|_0^t dy  \label{eq:local_time_int_2}\\
&+ \int_0^t \int_{\bR} \partial_y \cF_n(Y_s,y)  \partial_{ty} q(s,y) dy ds \nonumber\\
&\stackrel{n \to +\infty}{\longrightarrow} -  \int_{\bR} \partial_y^- \cF(Y_s,y) \partial_y q(s,y) \Big|_0^t dy \nonumber\\
&+ \int_0^t \int_{\bR} \partial_y^-  \cF(Y_s,y)  \partial_{ty} q(s,y) dy ds \nonumber\\
&= \int_0^t \int_{\bR} q(s,y) d_{s,y} \partial_y^- \cF(Y_s,y), \nonumber
\end{align}
where the last equalities in (\ref{eq:local_time_int_1}) and (\ref{eq:local_time_int_2}) follow from item 3 of Hypotheses \ref{hypo:hypotheses}. Therefore, using well-known arguments along the lines of \cite[Proof of Theorem 2.1]{generalized_ito_elworthy}, we can extend the formulas above for $g(y) = L^x(t,y)$ and $q(s,y) = L^x(s,y)$, and finally conclude
\begin{align*}
\lim_{n \to +\infty} \frac{1}{2} \int_0^t \Delta_{xx} f_n(X_s) d\langle x \rangle_s &= \int_{\bR}  L^x(t,y) d_y\partial_y^- \cF(X_t,y) \\
&- \int_0^t \int_{\bR} L^x(s,y) d_{s,y}\partial_y^- \cF(X_s,y),
\end{align*}
as desired.
\end{proof}

\begin{remark}
By the same arguments presented in \cite{generalized_ito_elworthy_corrected}, we could show that $\int_{\bR}  L^x(t,y)$ $d_y\partial_y^- \cF(X_t,y)$ is of bounded variation in $t$ in $[0,T]$. Therefore, $(f(X_t))_{t \in [0,T]}$ is a semimartingale.
\end{remark}

\begin{remark}
Following the idea of \cite[Theorem 2.3]{generalized_ito_elworthy}, we could consider the process $x_t^{\star} = x_t - a_t$, where $(a_t)_{t \geq 0}$ is a continuous process of finite variation. It is obvious that $x^{\star}$ is also a semimartingale. Denote the local time of $x^{\star}$ by $L^{x-a}$. Therefore, the same argument of \cite[Theorem 2.3]{generalized_ito_elworthy} applied to the computation we have just performed in (\ref{eq:local_time_int_1}) and (\ref{eq:local_time_int_2}) gives us the following version of the functional Meyer-Tanaka formula
\begin{align}
f(X_t) &= f(X_0) + \int_0^t \Delta_t f(X_s) ds + \int_0^t \Delta_x^- f(X_{s}) dx_s \label{eq:meyer_tanaka_form2} \\
&+ \int_{\bR}  L^{x-a}(t,y) d_y\partial_y^- \cF(X_t,y+a_t) - \int_0^t \int_{\bR} L^{x-a}(s,y) d_{s,y}\partial_y^- \cF(X_s,y+a_s). \nonumber
\end{align}
This version of the formula will be used in the running maximum example in Section \ref{sec:running_max}.
\end{remark}


\section{Applications}\label{sec:applications}

\subsection{Convex Functionals}

In this section we define the notion of convexity for functionals and then discuss some of its basic properties. The main interesting consequence is that some of the technical assumptions in Hypotheses \ref{hypo:hypotheses} can be weakened.

\begin{definition}[Convex Functionals]\label{def:convex_funbctional}
We say $f$ is a \textit{convex functional} if $F(Y_t,\cdot)$ is a convex real function for any $Y_t \in \Lambda$.
\end{definition}

Notice that, for $f \in \bC^{1,2}$, convexity implies that $\Delta_{xx} f(Y_t) \geq 0$, for any $Y_t \in \Lambda$.

\begin{remark}
Another possible definition for convexity of a functional would be
\begin{align}\label{eq:strong_convex}
f(\lambda Y_t + (1 - \lambda)Z_t) \leq \lambda f(Y_t) + (1 - \lambda) f(Z_t),
\end{align}
for all $\lambda \in [0,1]$ and $Y_t,Z_t \in \Lambda$. Observe $Y_t$ and $Z_t$ must be in the same $\Lambda_t$ space. This clearly implies the previous definition of convexity because
$$F(Y_t, \lambda h_1 + (1-\lambda)h_2) = f(\lambda Y_t^{h_1} + (1 - \lambda) Y_t^{h_2}).$$
However, condition (\ref{eq:strong_convex}) is stronger than necessary for what follows.
\end{remark}

For a convex functional $f$, for any $Y_t \in  \Lambda$, $F(Y_t,\cdot)$ is continuous, $\partial_h^{\pm} F(Y_t,h)$ exist for any $h \in \bR$ and is non-decreasing in $h$. Moreover, $\partial_h^{\pm} F(Y_t,h) = \Delta_x^{\pm} F(Y_t,h)$, where these one-sided functional derivatives are obviously defined as
$$\Delta_x^{\pm} f(Y_t) = \lim_{h \to 0^{\pm}} \frac{f(Y_t^h) - f(Y_t)}{h}.$$

\begin{proposition}
\begin{enumerate}[leftmargin=0cm,itemindent=.5cm,labelwidth=\itemindent,labelsep=0cm,align=left]

Assume $f$ is convex. The following facts hold true:\\

\item $f_n$ is convex. Moreover, $\Delta_x f_n(Y_t)$ increasingly converges to $\Delta_x^- f(Y_t)$.\\

\item If $(\partial_h^- F(X_s,-h))_{s \in [0,T]}$ is $x$-integrable for $h=\rho_{\min}$ and $h=\rho_{\max}$, then
\begin{align*}
\lim_{n \to +\infty} \int_0^t \Delta_x f_n(X_s) dx_s = \int_0^t \Delta_x^- f(X_s) dx_s \ \mbox{ u.c.p.}
\end{align*}

\end{enumerate}
\end{proposition}

\begin{proof}
\hspace{1cm}
\begin{enumerate}[leftmargin=0cm,itemindent=.5cm,labelwidth=\itemindent,labelsep=0cm,align=left]

\item Indeed,
\begin{align*}
F_n(Y_t, \lambda h_1 + (1-\lambda)h_2) &= \int_{\bR} F(Y_t,(\lambda h_1 + (1-\lambda)h_2)-y) \rho_n(y) dy \\
&= \int_{\bR} F(Y_t,(\lambda (h_1 - y) + (1-\lambda)(h_2-y)) \rho_n(y) dy \\
&\leq \lambda F_n(Y_t, h_1) + (1-\lambda)F_n(Y_t, h_2).
\end{align*}
Hence, since $f_n$ is smooth, $\Delta_{xx} f_n(Y_t) \geq 0$. The second affirmation follows from:
\begin{align}
\Delta_x f_n(Y_t) = \int_{\rho_{\min}}^{\rho_{\max}} \rho(\xi)\partial_h^- F\left(Y_t, - \frac{\xi}{n} \right) d\xi, \label{eq:deriv_Fn_int_convex}
\end{align}
and it is easy to see the desired result using the fact that $\partial_h^- F(Y_t,h)$ is non-decreasing in $h$, because of the convexity of $f$.\\

\item Since $\partial_h^- F(Y_t,h)$ is non-decreasing in $h$, by Equation (\ref{eq:deriv_Fn_int_convex}),
\begin{align*}
&\partial_h^- F(Y_s,-\rho_{\max}) \leq \partial_h^- F\left(Y_s,- \frac{\rho_{\max}}{n} \right) \leq \partial_h^- F\left(Y_s,- \frac{\xi}{n} \right), \\
&\partial_h^- F\left(Y_s,- \frac{\xi}{n} \right) \leq \partial_h^- F\left(Y_s,- \frac{\rho_{\min}}{n} \right) \leq \partial_h^- F(Y_s, -\rho_{\min}),
\end{align*}
where we are using the fact that $\rho_{\min} < 0$ and $\rho_{\max} > 0$; see Remark \ref{rmk:bound_mollifier}. Hence
$$|\Delta_x f_n(Y_s)| \leq |\partial_h^- F(Y_s, -\rho_{\min})| + |\partial_h^- F(Y_s,-\rho_{\max})|,$$
and the convergence follows as in Proposition \ref{prop:conv_properties}

\end{enumerate}
\end{proof}

Therefore, we might then consider the following class of convex functionals, where we have weakened conditions 3 and 4 of Hypotheses \ref{hypo:hypotheses}:

\begin{hypotheses}\label{hypo:hypotheses_convex}
\hspace{1cm}
\begin{enumerate}

\item $f$ $h$-time functional differentiable as in Definition \ref{def:time_derivative_mollifier};\\

\item $f$ and $\Delta_t F$ are $\Lambda$-$\phi$-equicontinuous as in Definition \ref{defi:assump_continuity};\\

\item $\partial_y^- \cF(Y_s,y)$ is of bounded variation for $(s,y)$ jointly, for any $Y \in \Lambda$;\\

\item $(\partial_h^- F(X_s,-h))_{s \in [0,T]}$ is $x$-integrable for $h=\rho_{\min}$ and $h=\rho_{\max}$;\\

\item $f$ is convex;

\end{enumerate}

\end{hypotheses}

\begin{remark}
It is straightforward to notice that if $f$ satisfies Hypotheses \ref{hypo:hypotheses_convex}, then $f$ also satisfies Hypotheses \ref{hypo:hypotheses}. So, the functional Meyer-Tanaka formula, Theorem \ref{thm:meyer_tanaka_form}, holds for $f$.
\end{remark}

Similarly as in \cite{Protter2005}, we may analyze the limit of $f_n(Y_t)$ without identifying the limit of the It\^o term.

\begin{theorem}
Let $f$ be a functional satisfying Hypotheses \ref{hypo:hypotheses_convex} and $x$ a continuous semimartingale. Then
\begin{align} \label{eq:ito_formula_A}
f(X_t) = f(X_0) + \int_0^t \Delta_t f(X_s) ds + \int_0^t \Delta_x^- f(X_s) dx_s + \frac{1}{2} A_t^f \quad \bP-\mbox{a.s.},
\end{align}
where $A_t^f$ is a continuous and increasing process.
\end{theorem}

\begin{proof}
As we have seen in the proof of Theorem \ref{thm:meyer_tanaka_form},
\begin{align*}
f_n(X_t) = f_n(X_0) +  \int_0^t &\Delta_t f_n(X_s) ds + \int_0^t \Delta_x f_n(X_s) dx_s + \frac{1}{2} \int_0^t \Delta_{xx} f_n(X_s) d\langle x \rangle_s.
\end{align*}
Consider now the continuous process
$$A_t^n = \int_0^t \Delta_{xx} f_n(X_s) d\langle x \rangle_s.$$
This process is increasing because $f_n$ is convex, which means $\Delta_{xx} f_n \geq 0$. Hence, by Equations (\ref{eq:converg_F})--(\ref{eq:converg_int_DeltatF}), $A_t^n$ converges u.c.p. to a continuous increasing process $A_t^f$ that satisfies Equation (\ref{eq:ito_formula_A}).
\end{proof}

\begin{remark}
As in the classical case, Equation (\ref{eq:ito_formula_A}) shows that the convex functional of a continuous semimartingale is also a continuous semimartingale.
\end{remark}

\subsection{The Running Maximum}\label{sec:running_max}

The running maximum (or more precisely, supremum) is defined as
\begin{align}\label{eq:running_max_application}
\barm(Y_t) = \sup_{0 \leq s \leq t} y_s,
\end{align}
for any $Y_t \in \Lambda$.\\

Let us first verify that $\barm$ is $\Lambda$-continuous. Notice $\barm(Y_t) = \barm(Y_{t,r})$, for any $Y_t \in \Lambda$ and $r \geq 0$. Hence, if we fix $Y_t, Z_s \in \Lambda$ with $s \geq t$, we find
\begin{align*}
|\barm(Y_t) - \barm(Z_s)| &= |\barm(Y_{t,s-t}) - \barm(Z_s)| \\
&= \left| \sup_{0 \leq u \leq s} Y_{t,s-t}(u) - \sup_{0 \leq u \leq s} Z_s(u)\right| \\
&\leq \sup_{0 \leq u \leq s} | Y_{t,s-t}(u)  - Z_s(u)| \leq d_{\Lambda}(Y_t,Z_s).
\end{align*}
Therefore, the running maximum is (Lipschitz) $\Lambda$-continuous. Moreover, one could also verify that $\Delta_t \barm(Y_t) = 0$. Define now the subset of $\Lambda$ where the supremum is attained at the last value:
$$\cS = \left\{Y_t \in \Lambda \ ; \ \barm(Y_t) = y_t \right\}.$$
For paths in $\cS$, the space functional derivative is not defined: the right derivative is 1 and the left derivative is 0. For paths outside $\cS$, the space functional derivative is well-defined and it is 0: $\Delta_x \barm(Y_t) = 0$, for $Y_t \notin \cS$.\\

We show below that the running maximum is $\Lambda$-equicontinuous according to Definition \ref{defi:assump_continuity}. One can easily see that
$$\barM(Y_t,\xi) = \barm(Y_t^\xi) = \max\left\{ \barm(Y_t), y_t + \xi^+\right\}.$$
Moreover, for any $Y_t, Z_s \in \Lambda$,
\begin{align*}
|\barM(Y_t,\xi) - \barM(Z_s,\xi)| &\leq d_{\Lambda}(Y_t^{\xi},Z_s^{\xi}) = d_{\Lambda}(Y_t, Z_s).
\end{align*}
Since the bound above is independent of $\xi$, the running maximum is $\Lambda$-equicontinuous. Besides, we notice that
$$\barm((Y_{t,\delta t})^\xi) = \max\left\{ \barm(Y_t), y_t + \xi^+\right\} = \barm((Y_t^\xi)_{t,\delta t}),$$
and therefore, $\barm$ satisfies Definition \ref{def:time_derivative_mollifier}. Furthermore, this shows that the running maximum is locally weakly path-dependent, i.e. the Lie bracket is zero (in the limit characterization), see Remark \ref{rmk:lie_bracket}.\\

Additionally, one can easily prove that the running maximum $\barm(Y_t)$ is a (non-smooth) convex functional. It is actually convex in the stronger sense of \eqref{eq:strong_convex}. Additionally, $\partial_h^- \barM(Y_t,h) = \Delta_x^- \barm(Y_t^h) = 0$.\\

Finally, we are ready to employ the functional Meyer-Tanaka formula, Theorem \eqref{thm:meyer_tanaka_form}, to the running maximum. Firstly, we have already shown that
\begin{align*}
&\Delta_t \barm(Y_t) = 0 \mbox{ and }\Delta_x^- \barm(Y_t) = 0, \ \forall \ Y_t \in \Lambda.
\end{align*}
Notice now
$$\barm(Y_{t-}) = \sup_{0 \leq s < t} y_u$$
(time $t$ is not allowed in the supremum) and notice that
$$\overline{\cM}(Y_t,y+h) = \barm(Y_{t-}^{y+h})= \max\{y+h, \barm(Y_{t-})\}.$$
Hence, we can compute
\begin{align*}
\partial_y^- \overline{\cM}(Y_t,y) = 1_{\{y > \barm(Y_{t-})\}} \Rightarrow d_y \partial_y^- \overline{\cM}(Y_t,y) = \delta_{\barm(Y_{t-})}(dy),
\end{align*}
where $\delta_c$ is the Dirac mass concentrated at $c \in \bR$. We then face a problem, because $d_{t,y}\partial_y^- \overline{\cM}(Y_t,y)$ is not easily computed. However, we notice that
\begin{align*}
\partial_y^- \overline{\cM}(Y_t,y+\barm(Y_{t-})) = 1_{\{y > 0\}} \Rightarrow & \ d_y \partial_y^- \overline{\cM}(Y_t,y+\barm(Y_{t-})) = \delta_0(dy) \\
&\mbox{ and } d_{t,y} \partial_y^- \overline{\cM}(Y_t,y+\barm(Y_{t-})) = 0.
\end{align*}
Hence, we have seen that $\barm$ satisfies Hypotheses \ref{hypo:hypotheses_convex}. We will then apply formula (\ref{eq:meyer_tanaka_form2}) with $a_t = \barm(X_t) = \barm(X_{t-})$, which is clearly a continuous process of finite variation, since $(x_t)_{t \geq 0}$ is a continuous semimartingale. These equalities hold because the process $x$ is continuous. Therefore, by Equation (\ref{eq:meyer_tanaka_form2}), we finally find the pathwise version of the important formula of L\'evy:
\begin{align*}
\barm_t = \max_{0 \leq s \leq t} x_s = x_0 + L^{x-\barm}(t,0),
\end{align*}
where $L^{x-\barm}$ is the local time of the process $(x_t - \barm_t)_{t \in [0,T]}$.\\

Furthermore, the same analysis could be performed for the running minimum. Indeed,
\begin{align}\label{eq:running_min_application}
\underline{m}(Y_t) = \inf_{0 \leq s \leq t} y_s = -\barm(-Y_t),
\end{align}
where $-Y_t(u) = -y_u$, for all $u \leq t$. Therefore, $\underline{m}$ satisfies Hypotheses \ref{hypo:hypotheses_convex} as well and
$$\underline{\cM}(Y_t, y) = - \overline{\cM}(-Y_t, -y) \Rightarrow \partial_y^- \underline{\cM}(Y_t, y) = 1_{\{y < \underline{m}(Y_{t-})\}},$$
where
$$\underline{m}(Y_{t-}) = \inf_{0 \leq s < t} y_u.$$
Then,
\begin{align*}
\underline{m}_t = \min_{0 \leq s \leq t} x_s = x_0 -  L^{x - \underline{m}}(t,0),
\end{align*}

\subsection{Characterization of Local Martingales Functions of the Running Maximum}

In the articles \cite{Obloj2006, Obloj2006a}, the authors studied the problem of complete characterization of local martingales that are functions of the current state of a continuous local martingale and its running maximum. In this section, we will show how the functional It\^o calculus framework can be used to study this problem.\\

\begin{theorem}\label{thm:local_time_zero_delta_t}
Let $(x_t)_{t \geq 0}$ be a continuous local martingale and consider a functional $f$ satisfying Hypotheses \ref{hypo:hypotheses}. Then $(f(X_t))_{t \geq 0}$ is a local martingale if and only if
\begin{align}\label{eq:local_time_zero_delta_t}
\int_0^t \Delta_t f(X_s) ds &+ \int_{\bR} L^x(t,y) d_y \partial_y^- \cF(X_t,y) \\
&= \int_0^t \int_{\bR} L^x(s,y) d_{s,y} \partial_y^- \cF(X_s,y),  \ \forall \ t \geq 0, \quad \bP-\mbox{a.s. } \nonumber
\end{align}
hen, if $\Delta_t f(X_s) = 0, \ \forall \ s \geq 0,$ $\bP$-a.s. and if $\partial_y  \partial_y^- \cF(X_s,y)$ exists, Equation (\ref{eq:local_time_zero_delta_t}) is equivalent to
\begin{align}\label{eq:local_time_zero_delta_t_simple}
\int_{\bR} \int_0^t \partial_y \partial_y^- \cF(X_s,y) d_s L^x(s,y) dy = 0.
\end{align}
\end{theorem}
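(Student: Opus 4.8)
The plan is to read the equivalence directly off the functional Meyer-Tanaka formula (Theorem \ref{thm:meyer_tanaka_form}) together with the uniqueness of the canonical decomposition of a continuous semimartingale. Since $x$ is a continuous local martingale, the stochastic integral $M_t := \int_0^t \Delta_x^- f(X_s)\, dx_s$ appearing in (\ref{eq:meyer_tanaka_form}) is itself a continuous local martingale. Collecting the remaining terms as
$$V_t := \int_0^t \Delta_t f(X_s)\, ds + \int_{\bR} L^x(t,y)\, d_y\partial_y^- \cF(X_t,y) - \int_0^t \int_{\bR} L^x(s,y)\, d_{s,y}\partial_y^- \cF(X_s,y),$$
the formula reads $f(X_t) = f(X_0) + M_t + V_t$. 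First I would check that $V$ is a continuous finite-variation process with $V_0 = 0$: the first integral is absolutely continuous in $t$, while the two local-time terms are precisely the continuous increasing process $\frac{1}{2}A_t^f$ identified in the proof of Theorem \ref{thm:meyer_tanaka_form} (see also (\ref{eq:ito_formula_A})).

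Given this, the equivalence is immediate from the fact that a continuous local martingale of finite variation vanishing at $t=0$ is indistinguishable from $0$. If $f(X_t)$ is a local martingale, then $V = f(X) - f(X_0) - M$ is a continuous local martingale of finite variation with $V_0 = 0$, hence $V_t = 0$ for all $t$ a.s.; conversely, if $V_t \equiv 0$ then $f(X_t) = f(X_0) + M_t$ is a local martingale. Rearranging $V_t = 0$ into the two sides of (\ref{eq:local_time_zero_delta_t}) finishes the first assertion.

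For the second part, under $\Delta_t f(X_s) = 0$ the condition $V_t = 0$ collapses to $\frac{1}{2}A_t^f = 0$, so it remains to re-express $\frac{1}{2}A_t^f$ in the form appearing in (\ref{eq:local_time_zero_delta_t_simple}). Here I would avoid the integration by parts used to prove Theorem \ref{thm:meyer_tanaka_form} and instead keep the occupation-time representation in its raw form: since $\Delta_{xx} f_n(X_{s-}^y) = \partial_{yy}\cF_n(X_s,y)$, the occupation time formula (\ref{eq:occup_time_functional}) gives
$$\frac{1}{2}\int_0^t \Delta_{xx} f_n(X_s)\, d\langle x\rangle_s = \int_{\bR} \Big( \int_0^t \partial_{yy}\cF_n(X_s,y)\, d_s L^x(s,y) \Big)\, dy.$$
Because $\partial_y g(s,y) = \partial_{yy}\cF(X_s,y)$ is assumed to exist, letting $n \to +\infty$ (the left-hand side converging u.c.p. to $\frac{1}{2}A_t^f$, as in Section \ref{sec:prelim_result}) should yield
$$\frac{1}{2}A_t^f = \int_{\bR} \Big( \int_0^t \partial_y g(s,y)\, d_s L^x(s,y) \Big)\, dy,$$
so that $\frac{1}{2}A_t^f \equiv 0$ is exactly (\ref{eq:local_time_zero_delta_t_simple}).

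The step I expect to be the main obstacle is the passage to the limit in this last display, i.e. showing $\int_{\bR}\int_0^t \partial_{yy}\cF_n(X_s,y)\, d_s L^x(s,y)\, dy \to \int_{\bR}\int_0^t \partial_y g(s,y)\, d_s L^x(s,y)\, dy$. Pointwise convergence $\partial_{yy}\cF_n \to \partial_y g$ alone is not enough; one needs a dominated-convergence argument in the spirit of the proof of Theorem \ref{thm:meyer_tanaka_form}, controlling $\partial_{yy}\cF_n$ by integrable bounds furnished by the $\Lambda$-$\phi$-equicontinuity of $f$ and the compact support of $\rho$ (Remark \ref{rmk:bound_mollifier}), while keeping track that $d_s L^x(s,y)$ is, for a.e. $y$, a genuine random measure. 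The uniqueness-of-decomposition argument and the algebraic rearrangement are routine by comparison.
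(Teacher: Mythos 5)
Your proposal is correct and, for the first assertion, follows the paper's own route: decompose $f(X_t)$ via the functional Meyer--Tanaka formula \eqref{eq:meyer_tanaka_form} into $f(X_0)$ plus the local martingale $M_t = \int_0^t \Delta_x^- f(X_s)\,dx_s$ plus a continuous finite-variation term $V_t$ null at zero, and invoke the fact that a continuous local martingale of finite variation starting at $0$ vanishes identically (the paper leaves this standard step implicit; you spell it out, including the identification of the local-time terms with $\tfrac12 A^f_t$ from \eqref{eq:ito_formula_A}). For the second assertion your route genuinely differs. The paper passes from \eqref{eq:local_time_zero_delta_t} to \eqref{eq:local_time_zero_delta_t_simple} via the Fubini-type identity $\int_0^t \int_{\bR} L^x(s,y)\, d_{s,y}\, g(s,y) = \int_{\bR} \int_0^t L^x(s,y)\, d_s \partial_y g(s,y)\, dy$, followed by an implicit integration by parts in $s$ for each fixed $y$ (using $L^x(0,y)=0$); this works directly on the limit objects but is stated without proof and leans on the space-time Lebesgue--Stieltjes machinery of the Elworthy--Truman--Zhao framework. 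You instead return to the mollified functionals and pass to the limit in the occupation-time formula \eqref{eq:occup_time_functional}, obtaining the representation $\tfrac12 A^f_t = \int_{\bR}\int_0^t \partial_y g(s,y)\, d_s L^x(s,y)\, dy$, which combined with the Meyer--Tanaka identification of $\tfrac12 A^f_t$ yields the equivalence. Your route buys economy of tools---everything is already developed in Sections \ref{sec:conv_properties} and \ref{sec:prelim_result}, and the role of $A^f_t$ becomes transparent---but it hinges on interchanging the limit $n \to +\infty$ with integration against the random positive measure $d_s L^x(s,y)\,dy$, which you correctly flag as the main obstacle and only sketch (pointwise convergence of $\partial_{yy}\cF_n$ to $\partial_y g$ plus a domination argument from convexity and the compact support of $\rho$). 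That gap is genuine but comparable in weight to the unproven Fubini identity in the paper's own terse proof; completing either argument rigorously demands essentially the same analysis.
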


\begin{proof}
By the functional Meyer-Tanaka formula, Equation (\ref{eq:meyer_tanaka_form}), we find
\begin{align*}
f(X_t) &= f(X_0) + \int_0^t \Delta_t f(X_s) ds + \int_0^t \Delta_x^- f(X_{s}) dx_s\\
&+ \int_{\bR}  L^x(t,y) d_y\partial_y^- \cF(X_t,y) - \int_0^t \int_{\bR} L^x(s,y) d_{s,y}\partial_y^- \cF(X_s,y). \nonumber
\end{align*}
Hence, $(f(X_t))_{t \geq 0}$ is a local martingale if and only if
\begin{align*}
\int_0^t \Delta_t f(X_s) ds &+ \int_{\bR} L^x(t,y) d_y \partial_y^- \cF(X_t,y) = \int_0^t \int_{\bR} L^x(s,y) d_{s,y} \partial_y^- \cF(X_s,y).
\end{align*}
Furthermore, Equation (\ref{eq:local_time_zero_delta_t_simple}) follows from
\begin{align*}
&\int_0^t \int_{\bR} L^x(s,y) d_{s,y} \partial_y^- \cF(X_s,y) = \int_{\bR} \int_0^t L^x(s,y) d_s \partial_y \partial_y^- \cF(X_s,y) dy\\
&= \int_{\bR} \left(L^x(t,y) \partial_y \partial_y^- \cF(X_t,y) - \int_0^t \partial_y \partial_y^- \cF(X_s,y) d_s L^x(s,y) \right) dy.
\end{align*}
\end{proof}

\begin{remark}\label{rmk:support}
Let $(x_t)_{t \geq 0}$ be a continuous local martingale. Denote
$$C = \bigcup_{t > 0} supp(\barm_t) \subset \bR,$$
where $supp(z)$ is the support of the random variable $z$. By the Dambis-Dubins-Schwarz Theorem \cite[Theorem 4.6, Chapter 3]{Karatzas1988}, $x_t = b_{\langle x \rangle_t}$, where $b$ is a Brownian motion, which implies that $C = \bR_+$ for any continuous local martingale. This will be useful in the proof of the next theorem. 
\end{remark}

\begin{theorem}\label{thm:obloj}
Let $(x_t)_{t \geq 0}$ be a continuous local martingale with $x_0 = 0$ and consider $H: \bR^2 \longrightarrow \bR$ in $C^1(\bR^2)$. Then $(H(x_t, \barm_t))_{t \geq 0}$ is a right-continuous local martingale in the natural filtration of $x$ if and only if there exists $\psi: \bR \longrightarrow \bR$ in $C^1(\bR)$ such that
\begin{align}
H(x_1, x_2) = \int_0^{x_2} \psi(s) ds - \psi(x_2)(x_2 - x_1) + H(0,0), \ \forall \ (x_1, x_2) \in \bR^2. \label{eq:obloj}
\end{align}
\end{theorem}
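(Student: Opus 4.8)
The plan is to realize $H(x_t,\barm_t)$ as $f(X_t)$ for the functional $f(Y_t)=H(y_t,\barm(Y_t))$ and to read the characterization off the finite-variation part produced by the functional Meyer-Tanaka formula, or equivalently from Theorem \ref{thm:local_time_zero_delta_t}. First I would record the functional data of $f$. Since flat extension changes neither the endpoint value nor the supremum, $\Delta_t f(Y_t)=0$; and a direct computation on the bumped path, exactly as for $\barm$ in Section \ref{sec:running_max}, gives $\Delta_x^- f(Y_t)=\partial_1 H(y_t,\barm(Y_t))$, where $\partial_1,\partial_2$ are the partials of $H$. Writing $m_s=\barm(X_s)$ and using $\barm(X_{s-})=\barm(X_s)$ by continuity of $x$, one finds $\cF(X_s,y)=H(y,\max\{y,m_s\})$, so that on the range $y\le m_s$ (which is precisely the support of $L^x(s,\cdot)$) one has $\partial_y^-\cF(X_s,y)=\partial_1 H(y,m_s)$, while $\partial_y^-\cF$ acquires a jump of size $\partial_2 H(m_s,m_s)$ as $y$ crosses $m_s$. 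I would verify Hypotheses \ref{hypo:hypotheses} for $f$ along the lines of the running-maximum example, invoking the difference-of-convex extension of Remark \ref{rmk:generalization_hypo} to cover general $C^1$ data $H$ (the operative requirement being that $\partial_h^{\pm}F(Y_t,\cdot)$ be of locally bounded variation). Note that $H(x_t,\barm_t)$ is automatically continuous, so ``right-continuous'' is free.

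With these ingredients the functional Meyer-Tanaka formula yields
\begin{align*}
H(x_t,\barm_t)=H(0,0)+\int_0^t \partial_1 H(x_s,m_s)\,dx_s + V_t,
\end{align*}
where $V_t$ collects the two local-time integrals, and $(H(x_t,\barm_t))$ is a local martingale if and only if $V_t\equiv 0$ a.s. The heart of the proof is to decompose $V_t$ into two mutually singular pieces: a \emph{curvature} piece, which in the smooth case would read $\tfrac12\int_0^t \partial_{11}H(x_s,m_s)\,d\langle x\rangle_s$ and which in general is the local-time integral against the Lebesgue--Stieltjes measure $d_y\partial_1 H(y,m_s)$; and a \emph{boundary} piece $\int_0^t\partial_2 H(m_s,m_s)\,dm_s$ coming from the jump of $\partial_y^-\cF$ at the running maximum. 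The cleanest way to expose the boundary piece is to run the computation in the shifted version \eqref{eq:meyer_tanaka_form2} with $a_s=m_s$, exactly as in the L\'evy computation of Section \ref{sec:running_max}: there the jump of $\partial_y^-\cF(X_s,\cdot+m_s)$ sits at $y=0$ and is weighted by the boundary local time $L^{x-\barm}(\cdot,0)$, which by L\'evy's identity equals the running maximum, converting that contribution into $\int_0^t\partial_2 H(m_s,m_s)\,dm_s$.

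Granting this decomposition, the equivalence follows. For the ``if'' direction, the stated form gives $\partial_1 H(x_1,x_2)=\psi(x_2)$ (so the curvature piece vanishes) and $\partial_2 H(x_1,x_2)=-\psi'(x_2)(x_2-x_1)$ (so $\partial_2 H(x_2,x_2)=0$ and the boundary piece vanishes); hence $H(x_t,\barm_t)=H(0,0)+\int_0^t\psi(m_s)\,dx_s$ is a local martingale. For the ``only if'' direction, $V_t\equiv 0$ together with the mutual singularity of $dm_s$ (supported on $\{x_s=m_s\}$, a $d\langle x\rangle$-null set) and the occupation measure forces both pieces to vanish separately: the boundary piece gives $\partial_2 H(y,y)=0$ for every $y$ in the support of $\barm$, which is $\bR_+$ by Remark \ref{rmk:support}, while the curvature piece forces $\partial_1 H(\cdot,x_2)$ to be constant in its first argument on $\{x_1\le x_2,\ x_2\ge 0\}$. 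Thus $H(x_1,x_2)=\psi(x_2)x_1+b(x_2)$ with $\psi(x_2)=\partial_1 H(x_1,x_2)$ and $b\in C^1$; the diagonal condition reads $\psi'(x_2)x_2+b'(x_2)=0$, and an integration by parts, using $b(0)=H(0,0)$, yields $b(x_2)=H(0,0)-\psi(x_2)x_2+\int_0^{x_2}\psi(s)\,ds$, which rearranges to \eqref{eq:obloj}.

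The main obstacle I expect is the rigorous extraction of the boundary piece. Because $L^x(s,\cdot)$ vanishes to the right of $m_s$, the naive point mass at $y=m_s$ in the first Meyer-Tanaka integral is invisible (it is multiplied by $L^x(s,m_s)=0$), so the $dm_s$ contribution is genuinely hidden in the joint $(s,y)$-variation of $\partial_y^-\cF$ as the jump location $m_s$ moves; passing to the $x-\barm$ coordinates of \eqref{eq:meyer_tanaka_form2} and invoking L\'evy's identity is what makes this term explicit and identifies it with $\int_0^t\partial_2 H(m_s,m_s)\,dm_s$. A secondary technical point is the mutual-singularity and occupation-support argument needed both to separate the two conditions and to upgrade them from the realized range of $(x_s,m_s)$ to the full region $\{x_1\le x_2,\ x_2\ge 0\}$, the values of $H$ off this region being irrelevant to the process and simply declared by \eqref{eq:obloj}.
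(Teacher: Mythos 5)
Your proposal is correct and takes essentially the same route as the paper's proof: define $f(Y_t)=H(y_t,\barm(Y_t))$, check Hypotheses \ref{hypo:hypotheses} via Remark \ref{rmk:generalization_hypo} and a mollification to $C^2$ data, apply Theorem \ref{thm:local_time_zero_delta_t} with the $\barm$-shifted formula \eqref{eq:meyer_tanaka_form2}, split the finite-variation part into the curvature contribution on $\{y<0\}$ and the diagonal jump contribution at $y=0$, force both to vanish by disjointness of the supporting measures, and solve $\partial_{11}H=0$, $\partial_2 H(x_2,x_2)=0$ to reach \eqref{eq:obloj}. The only deviations are cosmetic: you convert the boundary term $\int_0^t \partial_2 H(\barm_s,\barm_s)\, d_s L^{x-\barm}(s,0)$ into $\int_0^t \partial_2 H(\barm_s,\barm_s)\, d\barm_s$ via L\'evy's identity and verify the ``if'' direction by a direct computation, where the paper keeps the local-time form and argues through a single chain of equivalences.
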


\begin{proof}
We start by defining the functional $f(Y_t) = H(y_t, \barm(Y_t))$. \\

Since $\barm(Y_{t,\delta t}) = \barm(Y_t)$ and $\Delta_x^- \barm(Y_t) = 0$, we easily conclude that $\Delta_t f(Y_t) = 0$ and $\Delta_x^- f(Y_t) = \partial_1 H(y_t, \barm(Y_t))$, where $\partial_i$ denotes the derivative with respect to $i$th variable of $H$, $i=1,2$. Smoothness of $H$ implies that $f$ satisfies Hypotheses \ref{hypo:hypotheses}.

To ease the burden of notation, notice that $\barm(X_{t-}) = \barm(X_t) = \barm_t$, since $x$ is continuous almost surely. By Theorem \ref{thm:local_time_zero_delta_t}, $(f(X_t))_{t \geq 0}$ is a local martingale if and only if
\begin{align}\label{eq:local_time_zero}
\int_{\bR} \int_0^t \partial_y \partial_y^- \cF(X_t,y+\barm_t) d_s L^x(s,y) dy = 0, \ \forall \ t \geq 0, \quad \bP-\mbox{a.s. }
\end{align}
By a mollification argument, we may assume for the moment that $H \in C^2(\bR^2)$ and then we are able to directly compute $\partial_y \partial_y^- \cF(X_t,y+\barm_t)$.

Note that $\cF(X_t,y+\barm_t) = H(y+\barm_t, \max\{y+\barm_t, \barm_t\})$. Hence, since $\max\{y+\barm_t, \barm_t\} = \barm_t + y^+$, we find
\begin{align*}
\partial_y^- \cF(X_t,y+\barm_t) &= \partial_1 H(y+\barm_t, \barm_t + y^+) \\
&+ \partial_2 H(y+\barm_t, \barm_t + y^+) 1_{\{y > 0\}},
\end{align*}
which implies that
\begin{align*}
\partial_y \partial_y^- \cF(X_t,y+\barm_t) &= \partial_{11} H(y+\barm_t, \barm_t + y^+) \\
&+ 1_{\{y > 0\}} (2\partial_{12} + \partial_{22})  H(y+\barm_t, \barm_t + y^+) \\
&+ \partial_2 H(y+\barm_t, \barm_t + y^+) \delta_0.
\end{align*}
Therefore,
\begin{align*}
&\int_{\bR} \int_0^t \partial_y \partial_y^- \cF(X_t,y+\barm_t) d_s L^{x-\barm}(s,y) dy \\
&= \int_{\bR} \int_0^t \partial_{11} H(y+\barm_t, \barm_t + y^+) d_s L^{x-\barm}(s,y) dy \\
&+ \int_0^{+\infty} \int_0^t (2\partial_{12} + \partial_{22})  H(y+\barm_t, \barm_t + y) d_s L^{x-\barm}(s,y) dy \\
&+ \int_0^t  \partial_2 H(\barm_t, \barm_t) d_s L^{x-\barm}(s,0) \\
&= \int_{-\infty}^0 \int_0^t \partial_{11} H(y+\barm_t, \barm_t + y^+) d_s L^{x-\barm}(s,y) dy \\
&+ \int_0^t  \partial_2 H(\barm_t, \barm_t) d_s L^{x-\barm}(s,0)
\end{align*}
since $L^{x-\barm}(t,y) = 0$, for $y > 0$. Then, by Equation (\ref{eq:local_time_zero}) and Remark \ref{rmk:support} and since the measures $d_s L^{x-\barm}(s,y)$ have disjoint supports for different $y < 0$, we must have, for all $(x_1, x_2) \in \bR \times \bR_+$,
\begin{align}\label{eq:condition_H}
\partial_{11} H(x_1, x_2) = 0 \mbox{ and } \partial_2 H(x_2,x_2) = 0 .
\end{align}
These equations can be solved analytically. The first equation above implies there exists $\psi, \varphi \in C^1(\bR)$ such that
$$H(x_1, x_2) = \psi(x_2) x_1 + \varphi(x_2).$$
Then, by the first equation in (\ref{eq:condition_H}) we find that
$$\psi'(x_2) x_2 + \varphi'(x_2) = 0,$$
which means
$$\varphi(x_2) = \varphi(0) - \int_0^{x_2} \psi'(s)sds = \varphi(0) - \psi(x_2)x_2 + \int_0^{x_2} \psi(s)ds.$$
Moreover, notice that $\varphi(0) = H(0,0)$. Therefore, a function $H \in C^1(\bR^2)$ is such that $(H(x_t, \barm_t))_{t \geq 0}$ is a local martingale if and only if there exists $\psi  \in C^1(\bR)$ such that
\begin{align*}
H(x_1, x_2) = \int_0^{x_2} \psi(s) ds - \psi(x_2)(x_2 - x_1) + H(0,0), \ \forall \ (x_1, x_2) \in \bR^2.
\end{align*}
\end{proof}

\begin{remark}
It is proved in \cite{Obloj2006} that
$$\psi(\xi) = \left.\frac{d\langle x, H(x, \barm) \rangle_t}{d\langle x \rangle_t}\right|_{t = T_\xi},$$
where $T_\xi = \inf\{t; x_t = \xi\}$. Within the functional framework, it easy to see that
\begin{align}
\psi(\barm(Y_t)) = \Delta_x^- H(y_t, \barm(Y_t)).
\end{align}
This formula could be evaluated pathwise to find $\psi(\xi)$.
\end{remark}

\subsection{Quadratic Variation}

The functional Meyer-Tanaka formula, Theorem \ref{thm:meyer_tanaka_form}, could provide interesting results even when applied to smooth functionals. As an illustrative example, let us consider the quadratic variation functional $\QV$, see \cite{fito_generalization} for the proper pathwise definition and discussion on its smoothness. It is straightforward and intuitive that $\Delta_t \QV = 0$ and that $\QV(Y_{t-}^y) = \QV(Y_{t-}) + (y-y_{t-})^2$. Therefore, the functional Meyer-Tanaka formula gives us the well-known formula
$$\langle x \rangle_t = 2\int_\bR L^x(t,y) dy,$$
for any continuous semimartingale $x$.

\subsection{Increasing Functionals}

\begin{definition}
A functional $f: \Lambda \longrightarrow \bR$ is called \textit{increasing} if $f(Y_t) \geq f(Y_s)$, for all $Y_t \in \Lambda$ and $s \leq t$, where $Y_s$ is the restriction of $Y_t$ to $[0,s]$.
\end{definition}

Consider now an increasing functional $f$ in $\bC^{1,2}$ with $\Delta_x f \in \bC^{1,1}$. Then, we find that $\Delta_t f \geq 0$ and that the path $(f(Y_t))_{t \in [0,T]}$ is of finite variation for any $Y_T \Lambda_T$. Hence, if $(w_t)_{t \in [0,T]}$ is a Brownian motion in $(\Omega, \fF, \bP)$, by the Functional It\^o Formula,
$$f(W_t) = f(W_0) + \int_0^t \Delta_t f(W_u) du + \int_0^t \Delta_x f(W_u) dw_u + \frac{1}{2} \int_0^t \Delta_{xx} f(W_u) du.$$
Now, since the increasing process $(f(W_t))_{t \geq 0}$ is of finite variation, by the uniqueness of the semimartingale decomposition, we conclude that $\Delta_x f(W_u) = 0$, for $u \in [0,T]$. Since $\Delta_x f \in \bC^{1,1}$ and the support of Brownian paths is the set of continuous paths, we have $\Delta_{xx} f(Y_t) = 0$ for any continuous path $Y_t$, see \cite[Theorem 2.2]{fito_fournie_thesis}. Therefore,
$$f(W_t) = f(W_0) + \int_0^t \Delta_t f(W_u) du.$$
Furthermore, by the $\Lambda$-continuity of the functionals involved in the equality above, we conclude that
$$f(Y_t) = f(Y_0) + \int_0^t \Delta_t f(Y_u) du,$$
for any continuous path $Y_t$. What happens if the functional $f$ is not smooth, but satisfies Hypotheses \ref{hypo:hypotheses}? In this case, for any local martingale $x$,
\begin{align*}
f(X_t) &= f(X_0) + \int_0^t \Delta_t f(X_s) ds + \int_0^t \Delta_x^- f(X_s) dx_s \\
&+ \int_{\bR}  L^x(t,y) d_y\partial_y^- \cF(X_t,y) - \int_0^t \int_{\bR} L^x(s,y) d_{s,y}\partial_y^- \cF(X_s,y),
\end{align*}
and, for the same reason, the stochastic integral term vanishes and we conclude that
\begin{align*}
f(X_t) &= f(X_0) + \int_0^t \Delta_t f(X_s) ds + \int_{\bR}  L^x(t,y) d_y\partial_y^- \cF(X_t,y) \\
&- \int_0^t \int_{\bR} L^x(s,y) d_{s,y}\partial_y^- \cF(X_s,y).
\end{align*}

\section*{Acknowledgements}
Firstly, I express my gratitude to B. Dupire for proposing such interesting problem and for the helpful discussions. I am thankful to J.-P. Fouque and T. Ichiba for all the insightful comments. Part of the research was carried out in part during the summer internship of 2013 supervised by B. Dupire at Bloomberg LP.

\bibliographystyle{plain}

\begin{thebibliography}{30}
\providecommand{\natexlab}[1]{#1}
\providecommand{\url}[1]{\texttt{#1}}
\expandafter\ifx\csname urlstyle\endcsname\relax
  \providecommand{\doi}[1]{doi: #1}\else
  \providecommand{\doi}{doi: \begingroup \urlstyle{rm}\Url}\fi

\bibitem{Al-Hussaini1987}
A.~N. Al-Hussaini and R.~J. Elliott.
\newblock An {E}xtension of {I}t\^o's {D}ifferentiation {F}ormula.
\newblock \emph{Nagoya Math. J.}, 105:\penalty0 9--18, 1987.

\bibitem{Carlen1992}
E.~Carlen and P.~Protter.
\newblock On {S}emimartingale {D}ecompositions of {C}onvex {F}unctions of
  {S}emimartingales.
\newblock \emph{Illinois J. Math.}, 36:\penalty0 345--527, 1992.

\bibitem{fito_cont_change}
R.~Cont and D.-A. Fourni\'e.
\newblock {C}hange of {V}ariable {F}ormulas for {N}on-{A}nticipative
  {F}unctional on {P}ath {S}pace.
\newblock \emph{J. Funct. Anal.}, 259\penalty0 (4):\penalty0 1043--1072, 2010.

\bibitem{fito_cont_martingales}
R.~Cont and D.-A. Fourni\'e.
\newblock {F}unctional {I}t\^o {C}alculus and {S}tochastic {I}ntegral
  {R}epresentation of {M}artingales.
\newblock \emph{Ann. Probab.}, 41\penalty0 (1):\penalty0 109--133, 2013.

\bibitem{american_min_detemple}
J.~Detemple, S.~Feng, and W.~Tian.
\newblock The {V}aluation of {A}merican {C}all {O}ptions on the {M}inimum of
  {T}wo {D}ividend-{P}aying {A}ssets.
\newblock \emph{Ann. Appl. Probab.}, 13:\penalty0 817--1230, 2003.

\bibitem{russian_option}
J.~D. Duffie and J.~M. Harrison.
\newblock Arbitrage {P}ricing of {R}ussian {O}ptions and {P}erpetual {L}ookback
  {O}ptions.
\newblock \emph{Ann. Appl. Probab.}, 3:\penalty0 641--651, 1993.

\bibitem{dupire94}
B.~Dupire.
\newblock Pricing with a {S}mile.
\newblock \emph{Risk Magazine}, 7:\penalty0 18--20, 1994.

\bibitem{fito_dupire}
B.~Dupire.
\newblock Functional {I}t\^o {C}alculus.
\newblock 2009.
\newblock Available at SSRN: \url{http://ssrn.com/abstract=1435551}.

\bibitem{fito_zhang_viscosity_I}
I.~Ekren, N.~Touzi, and J.~Zhang.
\newblock {V}iscosity {S}olutions of {F}ully {N}onlinear {P}arabolic {P}ath
  {D}ependent {PDE}s: {P}art {I}.
\newblock \emph{Ann. Probab}, 44\penalty0 (2):\penalty0 1212--1253,
  2016{\natexlab{a}}.

\bibitem{fito_zhang_viscosity_II}
I.~Ekren, N.~Touzi, and J.~Zhang.
\newblock {V}iscosity {S}olutions of {F}ully {N}onlinear {P}arabolic {P}ath
  {D}ependent {PDE}s: {P}art {II}.
\newblock \emph{Ann. Probab}, 44\penalty0 (4):\penalty0 2507--2553,
  2016{\natexlab{b}}.

\bibitem{generalized_ito_elworthy}
K.~D. Elworthy, A.~Truman, and H.~Zhao.
\newblock Generalized {I}t\^o {F}ormulae and {S}pace--{T}ime
  {L}ebesgue--{S}tieltjes {I}ntegrals of {L}ocal {T}imes.
\newblock \emph{S\'eminaire de Probabilit\'es XL}, pages 117--136, 2007.

\bibitem{generalized_ito_elworthy_corrected}
K.~D. Elworthy, A.~Truman, and H.~Zhao.
\newblock Generalized {I}t\^o {F}ormulae and {S}pace--{T}ime
  {L}ebesgue--{S}tieltjes {I}ntegrals of {L}ocal {T}imes: Corrected.
\newblock 2015.
\newblock Available at arXiv: \url{https://arxiv.org/abs/math/0505195}.

\bibitem{pde_evans}
L.~C. Evans.
\newblock \emph{Partial {D}ifferential {E}quations}.
\newblock American Mathematical Society, second edition, 2010.

\bibitem{generalized_ito_feng}
C.~Feng and H.~Zhao.
\newblock A {G}eneralized {I}t\^o's {F}ormula in {T}wo-{D}imensions and
  {S}tochastic {L}ebesgue--{S}tieltjes {I}ntegrals.
\newblock \emph{Electron. J. Probab.}, 12:\penalty0 1568--1599, 2007.

\bibitem{Foellmer1995}
H.~F\"ollmer, P.~Protter, and A.~Shiryaev.
\newblock Quadratic {C}ovariation and an {E}xtension of {I}t\^o's {F}ormula.
\newblock \emph{Bernoulli}, 1\penalty0 (1-2):\penalty0 149--169, 1995.

\bibitem{fito_fournie_thesis}
D.-A. Fourni\'e.
\newblock \emph{Functional {I}t\^o {C}alculus and {A}pplications}.
\newblock PhD thesis, Columbia University, 2010.

\bibitem{Ghomrasni2003}
R.~Ghomrasni and G.~Peskir.
\newblock Local {T}ime--{S}pace {C}alculus and {E}xtensions of {I}t\^o's
  {F}ormula.
\newblock \emph{Progr. Probab.}, 55:\penalty0 177--192, 2003.

\bibitem{fito_saporito_greeks}
S.~Jazaerli and Y.~F. Saporito.
\newblock {F}unctional {I}t\^o {C}alculus, {P}ath-dependence and the
  {C}omputation of {G}reeks.
\newblock \emph{to appear in the Stochastic Processes and their Applications.},
  2017.
\newblock Available at arXiv: \url{http://arxiv.org/abs/1311.3881}.

\bibitem{Karatzas1988}
I.~Karatzas and S.~E. Shreve.
\newblock \emph{Brownian {M}otion and {S}tochastic {C}alculus}.
\newblock Springer-Verlag, second edition, 1988.

\bibitem{klebaner02}
F.~Klebaner.
\newblock Option {P}rice when the {S}tock is a {S}emimartingale.
\newblock \emph{Electron. Commun. Probab.}, 7:\penalty0 79--83, 2002.

\bibitem{lowther_nondiff}
G.~Lowther.
\newblock Nondifferentiable {F}unctions of {O}ne-{D}imensional
  {S}emimartingales.
\newblock \emph{Ann. Probab.}, 38:\penalty0 76--101, 2010.

\bibitem{local_time_barrier}
A.~Mijatovi\'c.
\newblock Local {T}ime and the {P}ricing of {T}ime-{D}ependent {B}arrier
  {O}ptions.
\newblock \emph{Finance Stoch.}, 14:\penalty0 13--48, 2010.

\bibitem{fito_generalization}
H.~Oberhauser.
\newblock {A}n extension of the {F}unctional {I}t\^o {F}ormula under a {F}amily
  of {N}on-dominated {M}easures.
\newblock \emph{Stoch. Dyn.}, 16\penalty0 (4), 2016.

\bibitem{Obloj2006}
J.~Ob{\l}\'oj.
\newblock A {C}omplete {C}haracterization of {L}ocal {M}artingales which are
  {F}unctions of {B}rownian {M}otion and its {M}aximum.
\newblock \emph{Bernoulli}, 12\penalty0 (6):\penalty0 955--969, 2006.

\bibitem{Obloj2006a}
J.~Ob{\l}\'oj and M.~Yor.
\newblock On {L}ocal {M}artingale and its {S}upremum: {H}armonic {F}unctions
  and beyond.
\newblock In R.~L. Y.~Kabanov and J.~Stoyanov, editors, \emph{From Stochastic
  Analysis to Mathematical Finance: The Shiryaev Festschrif}. Springer-Verlag,
  Berlin, 2006.

\bibitem{fito_peng_bsde}
S.~Peng and F.~Wang.
\newblock {BSDE}, {P}ath-dependent {PDE} and {N}onlinear {F}eynman-{K}ac
  {F}ormula.
\newblock \emph{Science China Mathematics}, 59\penalty0 (1):\penalty0 19--36,
  2016.
\newblock Available at arXiv: \url{http://arxiv.org/abs/1108.4317}.

\bibitem{Peskir2005}
G.~Peskir.
\newblock A {C}hange--of--{V}ariable {F}ormula with {L}ocal {T}ime on {C}urves.
\newblock \emph{J. Theoret. Probab.}, 18:\penalty0 499--535, 2005.

\bibitem{Protter2005}
P.~E. Protter.
\newblock \emph{Stochastic {I}ntegration and {D}ifferential {E}quations}.
\newblock Springer, second edition, 2005.

\bibitem{Revuz2004}
D.~Revuz and M.~Yor.
\newblock \emph{Continuous {M}artingales and {B}rownian {M}otion}.
\newblock Springer, third edition, 2004.

\bibitem{Russo1996}
F.~Russo and P.~Vallois.
\newblock It\^o {F}ormula for ${C}^1$-{F}unctions of {S}emimartingales.
\newblock \emph{Probab. Theory Related Fields}, 104:\penalty0 27--41, 1996.

\end{thebibliography}

\end{document}